\newcommand*{\sheafhom}{\mathcal{H}\kern -.5pt om}
\numberwithin{equation}{section} 
\numberwithin{figure}{section} 
\numberwithin{table}{section} 
\newtheorem{thm}{Theorem}[section]
\newtheorem{cor}[thm]{Corollary}
\newtheorem{prop}[thm]{Proposition}
\newtheorem{lem}[thm]{Lemma}
\theoremstyle{definition}
\newtheorem{defn}[thm]{Definition}
\newtheorem{exmp}[thm]{Example}
\theoremstyle{remark}
\newtheorem{rem}[thm]{Remark}
\DeclareMathOperator{\St}{st}
\DeclareMathOperator{\lk}{lk}
\DeclareMathOperator{\sgn}{sgn}
\DeclareMathOperator{\Ast}{ast}
\DeclareMathOperator{\rk}{rank}
\newcommand{\horrule}[1]{\rule{\linewidth}{#1}} 
\title{	
	\normalfont \normalsize 
	\textsc{} \\ [25pt] 
	\horrule{0.5pt} \\[0.4cm] 
	\huge Flag complex face structures and decompositions

	\horrule{2pt} \\[0.5cm] 
}
\author{Soohyun Park } 
\date{\normalsize November 3, 2025} 
\begin{document}
	
	\maketitle 

	\begin{abstract}
		
		\noindent One of the most common and effective methods of obtaining structural information on simplicial complexes is to use tools from algebraic geometry/commutative algebra (often motivated by properties of toric varieties). However, there is no general algebro-geometric description of components of the gamma vector holding for arbitary flag simplicial spheres. This invariant occurs in many different contexts including permutation statistics, signatures of toric varieties, and Euler characteristics of nonpositively curved piecewise Euclidean manifolds. Combinatorial methods resulting from an explicit inverted Chebyshev expansion give rise to new positivity properties and cell complex structures that are of interest in their own right. Note that the focus is on the $f$-vector rather than the $h$-vector in ``algebraic'' settings. For flag simplicial spheres $\Delta$, the fact that $h(\Delta) = f(\Gamma)$  and compatibility between Chebyshev expansions and a modification of the $f$-polynomial by work of Hetyei are the key inputs. In the main formula implying new positivity results, local structures of $CAT(0)$ complexes and cubical analogues of barycentric subdivisions give deeper connections with cubical complex structures complementing earlier work related to the top gamma vector component. Afterwards, we return to the motivating example of barycentric subdivisions and consider how $f$-vectors of Cohen--Macaulay and vertex decomposable flag complexes in geometric settings decompose and interact with geometric transformations. This includes subdivisions of simplicial complexes and recursive properties they share with vertex decomposable flag complexes.
		
	\end{abstract}
	
	\section*{Introduction}

	In general, structural information on faces of simplicial complexes is often obtained via tools from commutative algebra and algebraic geometry to extract algebraic/geometric information on simplicial complex structures. This makes use of the $h$-vector (analogous to the even degree Betti numbers of simplicial toric varieties) instead of the $f$-vector that literally counts the number of faces of each dimension. Note that the two are technically related by an invertible linear transformation (p. 213 of \cite{BH}). This has been useful in many settings such as the proof of the upper bound theorem (Corollary 5.3 on p. 141 of \cite{Stubc}, Corollary 3.5 on p. 60 of \cite{St}, Corollary 5.4.7 on p. 240 of \cite{BH}, survey in \cite{FMS}). However, our main object of interest is a vector constructed out of the $h$-vector where a suitable description only exists for one component. \\
	
	To be more specific, this is the gamma vector of a reciprocal/palindromic polynomial. It appears in a wide range of combinatorial and geometric contexts including permutation statistics, signatures of toric varieties, and Euler characteristics of nonpositively curved piecewise Euclidean manifolds \cite{Athgam}. The nonnegativity of the gamma vector lies somewhere between unimodality and real-rootedness. Many of these examples fall under the setting of flag simplicial spheres. These are simplicial spheres where the 1-skeleton is the clique complex of a graph (equivalently having minimal nonfaces of size 2, see p. 100 of \cite{St}). The top component is the only component where there is a large class of cases with an algebraic interpretation (as the signature of a toric variety by work of Leung--Reiner \cite{LR}). We also note that the top component is the source of connections with nonpositively curved piecewise Euclidean manifolds by work of Charney--Davis \cite{CD}. \\

	As it turns out, a combinatorial view towards studying the gamma vector as a whole gives novel structural information of its own right. This involves both the cell complex structure and new positivity properties. Note that the $f$-vector plays a larger role here than the $h$-vector in the context of the geometric methods mentioned at the beginning. Our approach started in \cite{Pgamcheb} with looking at explicit formulas via gamma vector-like invariants for ``usual'' basis $x^k + x^{d - k}$ of palindromic/reciprocal polynomials of degree $\le d$. This led to an interpretation of the reciprocal polynomial of the gamma polynomial as a sort of inverted Chebyshev expansion (Theorem \ref{gamchebinv}).

	\begin{thm} \textbf{(Gamma vectors and inverted Chebyshev expansions, Theorem 1.7 on p. 7 of \cite{Pgamcheb})} \label{gamchebinv} \\
		Let $h(t) = h_0 + h_1 t + \ldots + h_{d - 1} t^{d - 1} + h_d t^d$ be a reciprocal polynomial satisfying the relations $h_k = h_{d - k}$ for $0 \le k \le \frac{d}{2}$. Then, the gamma-polynomial associated to $h(t)$ is equal to \[ \gamma(u) = u^{ \frac{d}{2} } g \left( \frac{1}{u} - 2 \right), \] where \[ g(u) \coloneq h_{ \frac{d}{2} } + 2 \sum_{ j = 1}^{ \frac{d}{2} } h_{ \frac{d}{2} - j } T_j \left( \frac{u}{2} \right). \] This is a sort of ``inverted Chebyshev basis expansion''. \\
		
		Alternatively, we can rewrite this as \[ u^{ \frac{d}{2}  }  \gamma \left( \frac{1}{u} \right)  = g(u - 2) \] or \[ (u + 2)^{ \frac{d}{2} } \gamma \left( \frac{1}{u + 2} \right) = g(u). \]
	\end{thm}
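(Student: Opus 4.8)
The plan is to establish all three displayed identities at once by producing a single "master" identity and then performing elementary substitutions. I take $d$ even throughout; the odd case is recovered afterward from the factorization $h(t) = (1+t)\tilde h(t)$, where $\tilde h$ is palindromic of degree $d-1$ with the \emph{same} gamma polynomial (a palindromic polynomial of odd degree vanishes at $t=-1$, and $(1+t)\sum_i \gamma_i t^i(1+t)^{d-1-2i} = \sum_i\gamma_i t^i(1+t)^{d-2i}$). The one idea driving everything is the substitution $w = t + t^{-1}$: once one divides $h(t)$ by $t^{d/2}$, both the Chebyshev side and the gamma side become manifestly $(t\leftrightarrow t^{-1})$-invariant Laurent polynomials in $t$, and I will identify each with an honest polynomial in $w$.

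\emph{Step 1 (the Chebyshev side).} Using $h_k = h_{d-k}$, divide $h(t)$ by $t^{d/2}$ and group terms in pairs:
\[ \frac{h(t)}{t^{d/2}} \;=\; h_{d/2} + \sum_{j=1}^{d/2} h_{d/2-j}\,\bigl(t^{j} + t^{-j}\bigr). \]
Then I invoke the Laurent-polynomial identity $t^{j} + t^{-j} = 2\,T_j\!\left(\tfrac{t+t^{-1}}{2}\right)$, proved by induction on $j$: the quantities $Q_j := t^j + t^{-j}$ and $P_j := 2T_j\!\left(\tfrac{t+t^{-1}}{2}\right)$ both satisfy the three-term recursion $X_{j+1} = (t+t^{-1})X_j - X_{j-1}$ with the same initial data $X_0 = 2$, $X_1 = t+t^{-1}$ (for $P_j$ this is the Chebyshev recursion $T_{j+1} = 2xT_j - T_{j-1}$ evaluated at $x = (t+t^{-1})/2$). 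Substituting yields $h(t)/t^{d/2} = g(t + t^{-1})$, with $g$ exactly the polynomial in the statement.

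\emph{Step 2 (the gamma side).} Write the (unique) gamma expansion $h(t) = \sum_{i=0}^{d/2}\gamma_i\,t^i(1+t)^{d-2i}$; uniqueness holds because the polynomials $t^i(1+t)^{d-2i}$ for $0\le i\le d/2$ have distinct minimal-degree terms $t^0,\dots,t^{d/2}$ and hence form a basis of the space of palindromic polynomials of degree $\le d$ with center $d/2$. Dividing by $t^{d/2}$ and using $(1+t)^2/t = t + 2 + t^{-1} = w + 2$ with $w := t+t^{-1}$,
\[ \frac{h(t)}{t^{d/2}} \;=\; \sum_{i=0}^{d/2}\gamma_i\left(\frac{(1+t)^2}{t}\right)^{\!d/2-i} \;=\; \sum_{i=0}^{d/2}\gamma_i\,(w+2)^{\,d/2-i} \;=\; (w+2)^{d/2}\,\gamma\!\left(\tfrac{1}{w+2}\right), \]
the last expression being a genuine polynomial in $w$ since $\deg\gamma \le d/2$.

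\emph{Step 3 (conclusion).} Both $g(t+t^{-1})$ and $(w+2)^{d/2}\gamma(1/(w+2))\big|_{w=t+t^{-1}}$ equal $h(t)/t^{d/2}$, hence coincide as $(t\leftrightarrow t^{-1})$-invariant Laurent polynomials. Since $\mathbb{Q}[w]\to\mathbb{Q}[t,t^{-1}]$, $w\mapsto t+t^{-1}$, is injective onto the subring of such invariants, this forces the polynomial identity $g(w) = (w+2)^{d/2}\gamma(1/(w+2))$. Putting $w = u-2$ gives $u^{d/2}\gamma(1/u) = g(u-2)$; putting $w=u$ gives $(u+2)^{d/2}\gamma(1/(u+2)) = g(u)$; and replacing $u$ by $1/u$ in the first of these gives $\gamma(u) = u^{d/2}g(1/u - 2)$ — all three displayed forms. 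I expect the only real subtlety to be the bookkeeping in Step 3: making precise that equality of invariant Laurent polynomials descends to equality of polynomials in $w = t+t^{-1}$, and double-checking that each of the two Laurent polynomials is indeed invariant (immediate from $h_k = h_{d-k}$ on one side, and from the palindromy of each $t^i(1+t)^{d-2i}$ on the other).
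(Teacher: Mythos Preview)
Your argument is correct. The key substitution $w = t + t^{-1}$ simultaneously linearizes both the Chebyshev side (via the classical identity $t^j + t^{-j} = 2T_j\!\left(\tfrac{t+t^{-1}}{2}\right)$) and the gamma side (via $(1+t)^2/t = w+2$), and the descent from an equality of symmetric Laurent polynomials to an equality in $\mathbb{Q}[w]$ is justified exactly as you say. All three displayed forms then follow by the indicated substitutions.

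There is nothing to compare against in this paper: Theorem~\ref{gamchebinv} is quoted here from the author's companion paper \cite{Pgamcheb} and is used as a black box, so the present paper contains no proof of it. Your proof is self-contained and is essentially the natural one; the only superfluous part is the digression on odd $d$, since the statement as written (with $h_{d/2}$ appearing explicitly and the sum running to $d/2$) already presumes $d$ even.
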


	In addition to the properties studied in \cite{Pgamcheb}, the inverted Chebyshev expansion perspective also yields information on cell complex structures. This comes from the fact that $h(\Delta) = f(\Gamma)$ for an auxiliary balanced simplicial complex $\Gamma$ when $\Delta$ is a flag simplicial sphere \cite{CCV} and the compatibility between Chebyshev polynomials and a variant of the $f$-polynomial by work of Hetyei \cite{He2} ($F$-polynomial from Definition \ref{tchebdef}, Proposition \ref{tchebcompat}). 
	
	\begin{thm} (Hetyei, Proposition 3.3 on p. 578 -- 579 of \cite{He2}) \label{TchebFcompat}
		\\
		Given a simplicial complex $A$, let the $F$-polynomial of $A$ be $F(x) \coloneq f \left( \frac{x - 1}{2} \right)$. The Tchebyshev transform of the $F$-polynomial of $A$ is the $F$-polynomial of its Tchebyshev triangulation (ordered repeated edge subdivisions): \[ T(F_A)(x) = F_{T(A)}(x). \] 
	\end{thm}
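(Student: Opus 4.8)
\emph{Overview and the edge-subdivision toolkit.} The plan is to prove the identity first for a single simplex and then bootstrap to all simplicial complexes by inclusion--exclusion, exploiting that both $A \mapsto F_A$ and the Tchebyshev transform $T$ are linear while the Tchebyshev triangulation is compatible with gluing along subcomplexes. Fix the normalization $f(\Delta,t) = \sum_{G \in \Delta} t^{|G|}$, so that $f(\Delta,0) = 1$ (the empty face) and $F_\Delta(x) = f(\Delta,\tfrac{x-1}{2})$. A single edge subdivision of $\Delta$ at $e = \{u,v\}$, with link $L = \lk_\Delta(e)$, removes the faces $\{u,v\} \cup G$ and inserts $\{w\} \cup G$, $\{w,u\} \cup G$, $\{w,v\} \cup G$ for $G \in L$; counting faces gives $f(\Delta',t) = f(\Delta,t) + (t+t^2) f(L,t)$, hence $F_{\Delta'}(x) = F_\Delta(x) + \tfrac{x^2-1}{4} F_L(x)$. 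I would also use that an edge subdivision commutes with taking joins and that coning multiplies $f(\cdot,t)$ by $(1+t)$, i.e.\ multiplies $F$ by $\tfrac{x+1}{2}$.

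\emph{The simplex case.} Triangulating the simplex $\sigma_m$ on ordered vertices $v_0 < \cdots < v_{m-1}$ begins by subdividing $\{v_0,v_1\}$, and the new vertex $w$ then lies on no subsequently subdivided edge, so (using that subdivision commutes with joins) $T(\sigma_m) = w * T(\Gamma_m)$, where $\Gamma_m = \langle v_0,v_2,\ldots,v_{m-1}\rangle \cup \langle v_1,v_2,\ldots,v_{m-1}\rangle$ is a pair of $(m-1)$-vertex simplices glued along an $(m-2)$-vertex simplex. Writing $G_m := F_{T(\sigma_m)}$ and applying additivity of $F$ to the gluing, this yields $G_m = \tfrac{x+1}{2}(2G_{m-1} - G_{m-2})$ for $m \ge 2$, with $G_0 = 1$ and $G_1 = \tfrac{x+1}{2}$. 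On the other side, $f(\sigma_m,t) = (1+t)^m$ gives $F_{\sigma_m}(x) = (\tfrac{x+1}{2})^m$, and since $T_k\big(\tfrac{z+z^{-1}}{2}\big) = \tfrac{z^k+z^{-k}}{2}$ implies $T(p)\big(\tfrac{z+z^{-1}}{2}\big) = \tfrac12\big(p(z)+p(z^{-1})\big)$ for any polynomial $p$, we get $H_m := T(F_{\sigma_m})$ with $2H_m = 2^{-m}\big((z+1)^m + (1+z^{-1})^m\big)$ under the substitution $x = \tfrac{z+z^{-1}}{2}$. Using $1+x = \tfrac{(z+1)^2}{2z}$, $(z+1)-1 = z$, $(1+z^{-1})-1 = z^{-1}$, a one-line computation shows $2H_m = (1+x)(2H_{m-1} - H_{m-2})$ for $m \ge 2$, with the same initial data as $G_m$. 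Hence $G_m = H_m$ for all $m$, i.e.\ $F_{T(\sigma)} = T(F_\sigma)$ for every simplex $\sigma$.

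\emph{From simplices to all complexes.} I would finish by induction on the number of facets. Given $A$ with at least two facets, write $A = B \cup C$ with $B$ generated by a single facet $\sigma$ and $C$ generated by the others, and set $D = B \cap C$; then $C$ and $D$ have strictly fewer facets than $A$ (the facets of $D$ occur among the intersections $\sigma \cap \tau$ over facets $\tau$ of $C$), while $B$ is a simplex, already handled. Because an edge subdivision alters only the open star of the subdivided edge, no edge of $B$ outside $D$ meets a facet of $C$ and conversely, and each edge of $D$ is subdivided the same way whether regarded in $B$ or in $C$ once a global vertex order is fixed; hence the triangulation splits as $T(A) = T(B) \cup T(C)$ with $T(B) \cap T(C) = T(D)$. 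Additivity of $F$ for this gluing and linearity of $T$ then give $F_{T(A)} = F_{T(B)} + F_{T(C)} - F_{T(D)} = T(F_B) + T(F_C) - T(F_D) = T(F_B + F_C - F_D) = T(F_A)$, closing the induction.

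\emph{Main obstacle.} The real work — and where the geometry sits — is the structural input used in the second and third parts: verifying from the precise definition of the Tchebyshev triangulation that the triangulated simplex genuinely has the ``cone over a glued pair of smaller triangulated simplices'' form, in particular that the prescribed order of edge subdivisions never returns to a newly created vertex, and that the decomposition $A = B \cup C$ is respected by the triangulation for a fixed vertex order. These amount to careful bookkeeping about the order of subdivisions and the behaviour of links; once they are in place, the statement is exactly the coincidence of the two identical three-term recurrences obtained above.
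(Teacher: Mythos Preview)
Your argument is correct, but it follows a genuinely different route from the one the paper invokes. The paper does not prove this statement directly; it cites Hetyei, and later (Theorem \ref{Ftchebgen} and the remark following it) explains that Hetyei's proof proceeds by first establishing the explicit closed formula
\[
f_{k-1}(T(A)) \;=\; \sum_{j=k}^{2k} f_{j-1}(A)\, 2^{2k-j-1}\left(\binom{k}{2k-j}+\binom{k-1}{2k-j}\right)
\]
via a direct combinatorial count of the faces of $T(A)$ (tracking sign patterns on the pairs $(u_i,v_i)$), and then observing that the $F$-polynomial identity is a formal consequence of this formula. Your approach bypasses the closed formula entirely: you reduce to simplices by inclusion--exclusion (using that $T_<$ restricts to subcomplexes, so $T(B\cup C)=T(B)\cup T(C)$ with intersection $T(B\cap C)$), and on a simplex you exploit the cone structure $T(\sigma_m)\cong w*T(\Gamma_m)$ to derive the three-term recurrence $G_m=\tfrac{x+1}{2}(2G_{m-1}-G_{m-2})$, which you then match against the same recurrence for $T\bigl((\tfrac{x+1}{2})^m\bigr)$ via the substitution $x=\tfrac{z+z^{-1}}{2}$.

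The trade-off is clear: Hetyei's route yields the explicit $f$-vector formula as a byproduct, which the paper actually needs later (e.g.\ in Remark \ref{tchebdivgencom} and the cell-complex generalization). Your route is more structural and arguably cleaner if one only wants the $F$-polynomial identity itself, and it makes transparent why the result is governed by a Chebyshev-type recurrence; but it does not produce the closed formula, so in the context of this paper one would still have to go back and prove Part 1 of Theorem \ref{Ftchebgen} separately. One minor point worth tightening in your write-up: the phrase ``never returns to a newly created vertex'' is slightly misleading, since subdividing a later edge \emph{does} alter the star of $w$; what you actually use (and correctly state elsewhere) is that edge subdivision commutes with coning when the edge lies in the base, which is the real content.
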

	
	The main geometric transformation here is repeated (stellar) subdivision of all the edges in some order (Tchebyshev subdivision from Part 2 of Definition \ref{tchebdef}) and a formal generalization of this (Definition \ref{Tchebdivgen}, Remark \ref{tchebdivgencom}). This yields a sort of type A to type B transition (e.g. Coxeter complexes from type A to type B, dually permutohedra of these types from \cite{He3}). Heuristically, this is connected to transitions between ``Boolean structures'' (e.g. face posets of simplicial complexes) and cubical complexes (see Theorem \ref{intchebsubdiv}, Proposition \ref{barcover}, Remark \ref{indexcon}). \\
	
	Combining this with the earlier inverted Chebyshev expansion, we show that the reciprocal polynomial of the gamma polynomial, the $f$-polynomial of the Tchebyshev subdivision $T(\mathcal{D}(\Gamma))$ of an auxiliary complex $\mathcal{D}(\Gamma)$ constructed out of a balanced simplicial complex $\Gamma$ such that $h(\Delta) = f(\Gamma)$ for our flag simplicial sphere $\Delta$, and a cubical complex (mirroring/Danzer/power complex from Defintion \ref{mirdancom}) with local structure given by $T(\mathcal{D}(\Gamma))$ (Corollary \ref{invtchebsub}).

	\begin{thm} \label{invtchebsubintro} (Corollary \ref{invtchebsub}) \\
		Recall that \[ g_\Delta(u) \coloneq h_{ \frac{d}{2} }(\Delta) + 2 \sum_{j = 1}^{ \frac{d}{2} } h_{ \frac{d}{2} - j }(\Delta) T_j \left( \frac{u}{2} \right). \]
		
		We have that \[ \frac{g_\Delta(2w) + f_{ \frac{d}{2} - 1 }(\Gamma) }{2} - \sum_{i = 0}^{ \frac{d}{2} } f_{ \frac{d}{2} - j - 1 }(\Gamma) + 1  = F(T(\mathcal{D}(\Gamma)), w) = \frac{1}{2^{ n(T(\mathcal{D}(\Gamma))) }  } \widetilde{f}(M(T(\mathcal{D}(\Gamma)), w - 1), \] where the last equality follows from Proposition \ref{Fpolytodanz}. \\
	\end{thm}
	
	This ``local structure'' construction is motivated by a concrete construction using flag simplicial spheres $\Delta$ to build $CAT(0)$-complexes with repeated local structure given by $\Delta$ (Proposition 1.2.2 on p. 11 and Proposition 1.2.3 on p. 13 of \cite{DCox}, p. 21 of \cite{DM}). Note that an intermediate step between transformations between $F$-polynomials and $f$-polynomials are connected to face count conversions between these complexes (p. 302 of \cite{BBC}). The complex $\mathcal{D}(\Gamma)$ determining the local structure comes from unused colors of faces in ``signed proper $d$-colorings'' of the 1-skeleton of $\Gamma$ (``signed unused color complex'' from Corollary \ref{invtchebsub} and Definition \ref{sigunuscol}). In particular, the main formula involved (Corollary \ref{invtchebsub}) implies that reciprocal polynomial of the gamma polynomial with the variable multiplied by 2 has nonnegative coefficients (Part 1 of Corollary \ref{simpgamdanz}). \\

	Apart from Tchebyshev subdivisions and conversions to cubical complexes, another connection to cubical complex structures is a cubical analogue of barycentric subdivisions (Definition \ref{cubbary}). The $f$-vector transformation taking a simplicial complex to its cubical barycentric subdivison comes from translations, which naturally occur the positivity properties mentioned above. They also appear as an intermediate step in transformations between $F$-polynomials and $f$-polynomials. We note that our motivation for considering cubical complex structures came from studying the behavior of the expression when $\Delta$ is the barycentric subdivision of a simplicial sphere, in which case we have $\gamma(\Delta) = f(T)$ for some simplicial complex $T$ by work of Nevo--Petersen--Tenner (Theorem 1.1 on p. 1365 and Corollary 5.6 on p. 1376 of \cite{NPT}).

	Putting this together with the framework described in the previous paragraph, we find that there is a close connection between the $f$-polynomials of the cubical complexes above and cubical barycentric subdivisions and those of ``dual complexes'' of $T$ (Part 2 of Corollary \ref{simpgamdanz}). The following gives new positivity properties and formal structural results (where ``formal'' can be dropped in cases such as barycentric subdivisions of simplicial spheres). \\
	
	\begin{cor} \label{simpgamdanzintro} ~\\
		\vspace{-3mm}
		\begin{enumerate}
			\item Suppose that $h(\Delta)$ is the $h$-vector of a flag sphere of odd dimension $d - 1$ (i.e. $d$ is even). If $d \ge 4$, then the polynomial $g_\Delta(2(u + 1)) = (2(u + 2))^{\frac{d}{2}} \gamma_\Delta \left( \frac{1}{2(u + 2)} \right)$ has nonnegative coefficients. After multiplying by a power of 2, we have the $f$-polynomial of the mirroring/Danzer/power complex $M(T(\mathcal{D}(\Gamma)))$ with a nonnegative constant added to it. \\

			\item By the $f_k^\Box(S)$ formula below Definition \ref{cubbary}, substituting the variable $w$ by $u + 1$ sends the $f$-polynomial of a simplicial complex to that of its cubical barycentric subdivision. Consider the $f$-polynomial of the mirroring/Danzer/power complex $M(T(\mathcal{D}(\Gamma)))$. \\
			
			Corollary \ref{invtchebsub} implies that its $f$-polynomial behaves ``formally''  like a double cubical barycentric subdivision associated to some integer tuple. If $\gamma(\Delta) = f(T)$, we can remove the word ``formal'' and take the $f$-polynomial associated to a cell complex after a rescaling as the input.  \\

		\end{enumerate}
		
		\color{black}
	
	\end{cor}

	Recalling that simplicial complexes $\Gamma$ such that $h(\Delta) = f(\Gamma)$ have been a key input in our steps, this is followed by a discussion of the recursive structures of such $\Gamma$ in Section \ref{gamfdec}. In the barycentric subdivision case that led us to study connections between gamma vectors and $f$-vectors, the following a main underlying structure used. \\
	
	\begin{prop} (Nevo--Petersen--Tenner, Proposition 6.2 on p. 1377 of \cite{NPT}) \\
		Let $\Delta$ be a flag sphere. If $\gamma(\Delta)$ is the $f$-vector of a simplicial complex $S$, then $h(\Delta)$ is the $f$-vector of the following simplicial complex:
		
		\[ \Gamma = \{ F \cup G : F \in \mathcal{F}(\gamma(\Delta)), G \in 2^{[d - 2|F|]} \}, \] where $\mathcal{F}(T)$ is the standard compressed complex associated to $T$ taking $\frac{d}{2}$-compressed complexes $\mathcal{F}_k(f(T))$ and letting $k \to \infty$ (see p. 263 of \cite{Zie} and p. 1366 -- 1369 of \cite{NPT}).
		
		Note that the part involving $|F|$ is multiplied by $2$. \\
	\end{prop}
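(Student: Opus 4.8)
The plan is to fix the combinatorial conventions so that the statement reduces to a single coefficient comparison. Since $\Delta$ is a flag sphere of dimension $d-1$, the Dehn--Sommerville relations give $h_k(\Delta) = h_{d-k}(\Delta)$, so $h(\Delta;t) := \sum_{k=0}^{d} h_k(\Delta)\,t^k$ is palindromic of degree $d$, and it therefore admits its $\gamma$-expansion $h(\Delta;t) = \sum_{i\ge 0}\gamma_i\,t^i(1+t)^{d-2i}$ with $\gamma_0 = 1$ and $\gamma_i = 0$ for $i > \lfloor d/2\rfloor$ (the polynomials $t^i(1+t)^{d-2i}$, $0 \le i \le \lfloor d/2\rfloor$, form a basis of the space of palindromic polynomials of degree at most $d$, by triangularity with respect to degree). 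By hypothesis $\gamma(\Delta) = (\gamma_0,\gamma_1,\dots)$ is the $f$-vector of a simplicial complex, so by Kruskal--Katona it is the $f$-vector of the standard compressed complex $\mathcal{F}(\gamma(\Delta))$, every face of which has at most $\lfloor d/2\rfloor$ vertices since $\gamma_i = 0$ for $i > \lfloor d/2\rfloor$.

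Next I would make the vertex sets explicit. Let $V$ be the vertex set of $\mathcal{F}(\gamma(\Delta))$, adjoin a fresh linearly ordered set $w_1 < w_2 < \cdots < w_d$ disjoint from $V$, and interpret $[\,d-2|F|\,]$ as the initial segment $\{w_1,\dots,w_{d-2|F|}\}$ (legitimate since $|F| \le \lfloor d/2\rfloor$ for all $F \in \mathcal{F}(\gamma(\Delta))$). The key point to verify is that
\[
\Gamma \;=\; \bigl\{\, F \cup G \;:\; F \in \mathcal{F}(\gamma(\Delta)),\ G \subseteq \{w_1,\dots,w_{d-2|F|}\} \,\bigr\}
\]
really is a simplicial complex: if $\sigma = F \sqcup G \in \Gamma$ and $\tau \subseteq \sigma$, put $F' = \tau \cap V$ and $G' = \tau \cap \{w_1,\dots,w_d\}$; then $F' \subseteq F$ lies in $\mathcal{F}(\gamma(\Delta))$ because $\mathcal{F}(\gamma(\Delta))$ is a complex, and $|F'| \le |F|$ forces the nesting $\{w_1,\dots,w_{d-2|F|}\} \subseteq \{w_1,\dots,w_{d-2|F'|}\}$, so $G' \subseteq G$ lies in $\{w_1,\dots,w_{d-2|F'|}\}$ and $\tau = F' \sqcup G' \in \Gamma$. (This is precisely where it matters that one truncates an \emph{initial} segment; any simplicial complex with $f$-vector $\gamma(\Delta)$ would do, the standard compressed complex being simply a canonical choice.) Since $\sigma \mapsto (\sigma \cap V,\ \sigma \cap \{w_1,\dots,w_d\})$ recovers the pair $(F,G)$, each face of $\Gamma$ arises from exactly one admissible pair.

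Then I would count. A face of $\Gamma$ of cardinality $k$ is a disjoint union $F \sqcup G$ with $|F| = i$ and $|G| = k-i$; for each of the $\gamma_i$ faces $F$ of cardinality $i$ there are $\binom{d-2i}{k-i}$ admissible $G$, and by the previous step there is no overcounting, so
\[
f_{k-1}(\Gamma) \;=\; \sum_{i=0}^{\lfloor d/2\rfloor}\gamma_i\binom{d-2i}{k-i}, \qquad k = 0,1,\dots,d.
\]
On the other hand, extracting the coefficient of $t^k$ from $\sum_{i\ge 0}\gamma_i\,t^i(1+t)^{d-2i}$ gives exactly $h_k(\Delta) = \sum_{i\ge 0}\gamma_i\binom{d-2i}{k-i}$, the same number. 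Hence $f_{k-1}(\Gamma) = h_k(\Delta)$ for all $k$ (in particular $f_{-1}(\Gamma) = \gamma_0 = 1 = h_0(\Delta)$), i.e.\ $h(\Delta)$ is the $f$-vector of $\Gamma$, as claimed.

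I expect the only real obstacle to be the bookkeeping in the middle step --- verifying that $\Gamma$ is closed under passing to subfaces and that $(F,G) \mapsto F \cup G$ is injective on admissible pairs --- since the rest is the formal identity between the binomial expansion of $\sum_i \gamma_i t^i(1+t)^{d-2i}$ and the face count of this union-of-simplices construction. The remaining small points are the block $i=0$, which contributes the full simplex $2^{\{w_1,\dots,w_d\}}$ (i.e.\ the $(1+t)^d$ term), and the vanishing of contributions with $k < i$ or $k > d-i$, both handled by the convention $\binom{m}{j} = 0$ for $j<0$ or $j>m$.
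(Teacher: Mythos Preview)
Your proof is correct and follows exactly the approach the paper attributes to Nevo--Petersen--Tenner: the paper does not give its own proof of this proposition but cites it, and immediately afterward points to ``the identity expressing $h_i$ as a linear combination of $\gamma_j$ used to prove the result above (Observation 6.1 on p.~1377 of \cite{NPT}),'' which is precisely your identity $h_k = \sum_i \gamma_i \binom{d-2i}{k-i}$. Your verification that $\Gamma$ is closed under taking subfaces (via the nesting of initial segments) and that the map $(F,G)\mapsto F\sqcup G$ is a bijection onto the faces of $\Gamma$ fills in the details the paper leaves to the reference.
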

	
	This includes tracking changes under edge subdivisions and studying the vertex decomposable case.  \\

	\begin{thm} (Proposition \ref{edgegenbool}, Corollary \ref{vertdecbool})
		\begin{enumerate}
			\item (Proposition \ref{edgegenbool}) The edge subdivision of a simplicial complex that has a Boolean decomposition has a generalized Boolean decomposition (Definition \ref{booldecomp}).
			
			\item (Part 1 of Corollary \ref{vertdecbool}) Suppose that a simplicial complex $\Delta$ has an associated simplicial complex $\Gamma$ such that $h(\Delta) = f(\Gamma)$ and $\Gamma$ has a Boolean decomposition. Let $\Delta'$ be the (stellar) subdivision of $\Delta$ with respect to an edge $e \in \Delta$. Then, there is a simplicial complex $\Gamma'$ such that $h(\Delta') = f(\Gamma')$ and $\Gamma$ has a generalized Boolean decomposition. If $\lk_\Delta(e)$ also has $h(\lk_\Delta(e)) = f(\Gamma_e)$ for a simplicial complex $\Gamma_e$ with a Boolean decomposition, then $\Gamma *_{\Gamma_e * v} u$ and $\Ast_\Gamma(p) *_{\Gamma_e} u$ also have Boolean decompositions for any ``Boolean'' vertices $u$, $v$, and $p$ of $\Gamma$. \\
			
			\item (Part 2 of Corollary \ref{vertdecbool}) Suppose that $\Delta$ is a simplicial complex satisfying one of the following properties:
			
			\begin{itemize}
				\item $\Delta$ is vertex decomposable and flag.
				
				\item $\Delta$ is a balanced, vertex decomposable, flag simplicial complex.
			\end{itemize}

			Then, these properties are also satisfied by $\Ast_\Delta(v)$ and $\lk_\Delta(v)$ in the respective cases. Also, the existence of Boolean decompositions for simplicial complexes $\Gamma_1$ and $\Gamma_2$ such that $h(\Ast_\Delta(v)) = f(\Gamma_1)$ and $h(\lk_\Delta(v)) = f(\Gamma_2)$ would imply that one exists for $\Gamma \coloneq \Gamma_1 *_{\Gamma_2} u$ for a new vertex $u$ (noting that $h(\Delta) = f(\Gamma)$). \\ 
		\end{enumerate}
	\end{thm}

	Recursive properties common to both settings are listed in Proposition \ref{commrec}). \\

	We start our approach to studying connections between gamma vector and face structures with a review of Tchebyshev transforms/subdivisions and compatibility relations in Section \ref{tchebreview}. This is followed by setting up background information for analyzing related geometry of the simplicial complexes giving the relevant $f$-vectors in Section \ref{background}. A transformation that will occur frequently is taking the poset of intervals partially ordered by inclusion in Section \ref{approachback}. Putting this together, we find a connection between Tchebyshev subdivisions (repeated edge subdivisions) of ``unused coloring complexes'' (Definition \ref{sigunuscol}) and interval posets of $S$ such that $\gamma(\Delta) = f(S)$ (Corollary \ref{simpgamdanz}). Finally, we use generalizations of Tchebyshev subdivisions to cell complexes and the idea behind a proof of a result of Rowlands to study connections between $f$-vectors of flag complexes coming from gamma vector constructions and $CAT(0)$ complexes. Afterwards, we give an overview of Boolean decompositions and gamma positivity followed by an analysis of its behavior under edge subdivisions (Proposition \ref{edgegenbool}) and recursive properties connected to vertex decomposability (Corollary \ref{vertdecbool}).

	\color{black}

	\color{black}

	\section{Geometry of the gamma vector as a whole via Tchebyshev transformations and subdivisions \\} \label{chebexpcellgeo}

	While $h$-vectors are typically perceived as more ``geometric'' adapting information from toric varieties, we find that the combinatorial methods of thinking about $f$-vectors and the inverted Chebyshev expansion from Theorem \ref{gamchebinv} gives interesting structural information from a different perspective. In particular, we obtain new positivity properties and underlying cell complex structure describing the gamma vector as a whole. The main formula obtained is in Corollary \ref{invtchebsub} and implications including positivity and structural properties (e.g. repeated edge subdivisions, cubical analogues of barycentric subdivisions) are given in Corollary \ref{simpgamdanz}. This is enabled by the compatibility of $f$-vector data with repeated edge subdivisions encoded by linear transformations induced by Chebyshev polynomials studied in earlier work of Hetyei. We also have a deeper connection with cubical complexes beyond the top component via transformations from faces of flag simplicial spheres $\Delta$ to $CAT(0)$-complexes with repeated local structure given by $\Delta$ (Remark \ref{locflagspherecat}). Along the way, we find a sort of conversion from type A to type B data (keeping Coxeter complexes in mind) that seems analogous to switching between face posets of simplicial and cubical complexes (e.g. multiplication by 2 involved). Following initial definitions from Section \ref{tchebreview} and Section \ref{background}, the case of barycentric subdivisions of flag spheres is used to introduce and motivate transitions between ``Boolean structures'' (e.g. face poset of a simplicial complex) and cubical complexes and cubical barycentric subdivisions involved in Section \ref{approachback}. We note that the relationship between $f$-polynomials and $F$-polynomials is connected to both the local structure discussion above and cubical barycentric subdivisions giving cubical complexes. \\
	
	\color{black}
	
	\subsection{Background on Tchebyshev transforms/subdivisions and compatibility} \label{tchebreview}

	Before we study structural properties of cell complexes connected to Tchebyshev transformations, we first review some key definitions and properties mentioned in the introduction. \\
	
	\color{black}

	\begin{defn} (Hetyei, Definition 1.1 on p. 571 and Definition 2.1 on p. 574 of \cite{He2}, Definition 2 on p. 921 of \cite{He3}) \label{tchebdef} \\
		\begin{enumerate}
			\item Given a polynomial $F(x) = a_n x^n + \ldots + a_1 x + a_0 \in \mathbb{R}[x]$, the \textbf{Tchebyshev transforms} $T(F)(x)$ and $U(F)(x)$ \textbf{of the first and second kind } of $F(x)$ are given by 
			\[ T(F)(x) \coloneq a_n T_n(x) + \ldots + a_1 T_1(x) + a_0 \]
			and \[ U(F)(x) \coloneq a_n U_{n - 1}(x) + \ldots + a_1 U_0(x), \]
			
			where $T_m(x)$ is Tchebyshev polynomial of the first kind, determined by \[ T_m(\cos(\alpha)) = \cos(m \cdot \alpha) \] and $U_m(x)$ is the Tchebyshev polynomial of the second kind for all $m \ge 0$. \\
			
			\item  We define a \textbf{Tchebyshev triangulation} $T(\Delta)$ of a simplicial complex $\Delta$ having $m = f_1(\Delta)$ as follows. Number the edges $e_1, \ldots, e_m$ in some order. We subdivide the edge $e_i$ into a path of length 2 by adding the midpoint $w_i$ and we also subdivide all the faces containing $e_i$ by performing a stellar subdivision. Note that the $f$-vector from any initial ordering yields the same $f$-vector (and thus the same $F$-vector) (Theorem 3 on p. 922 of \cite{He3}). \\
		\end{enumerate}
		
	\end{defn}

	The Tchebyshev subdivision is compatible with a modification of the $f$-polynomial of a simplicial complex. \\

	\begin{prop} (Hetyei, Hetyei--Nevo from Proposition 3.3 on p. 578 -- 579 and Proposition 4.4 on p. 580 -- 581 of \cite{He2}, Proposition 5.1 on p. 99 of \cite{HN})  \label{tchebcompat} \\
		Given a simplicial complex $S$, let $F_S(x) \coloneq f_S \left( \frac{x - 1}{2} \right)$. Here, $f_\Delta(t)$ is the usual $f$-polynomial of a simplicial complex $\Delta$.
		
		\begin{enumerate}
			\item The Tchebyshev transform of the $F$-polynomial of a simplicial complex is the $F$-polynomial of its Tchebyshev triangulation: \[ T(F_\Delta)(x) = F_{T(\Delta)}(x). \]
			
			\item  The Tchebyshev transform of the second kind of the $F$-polynomial of a simplicial complex is half of the $F$-polynomial of its Tchebyshev triangulation: \[ U(F_\Delta)(x) = \frac{1}{2} F_{U(\Delta)}(x). \] Note that a Tchebyshev subivision is also the same transformation taking the type $A$ Coxeter complex to the type $B$ Coxeter complex \cite{He3}. Dually, one can consider faces of type A and type B permutohedra (p. 919 of \cite{He3}). \\
		\end{enumerate}
	\end{prop}

	\subsection{Cell complex structures induced by the $f$-vector perspective} \label{tchebcell}

	\subsubsection{Background related to cell and cubical complexes} \label{background}

	We give some background on objects that we will be studying. Recall that the $w = 2 \alpha + 1$ substitution was motivated by trying to use $f_S(x) = F_S(2x + 1)$ for a simplicial complex $S$ to show that the input for the Tchebyshev transformation (for polynomials) in $T(P_\Delta(w)) = g_\Delta(2w) = (2w + 2)^{ \frac{d}{2} } \gamma_\Delta \left( \frac{1}{2w + 2} \right)$ is the $F$-vector of some simplicial complex. However, the indices from Theorem \ref{danzinput} are flipped in that we have the index $\frac{d}{2} - k - 1$  term for the coefficient of $\alpha^k$ instead of something of index $k - 1$. \\

	A cubical complex that is closely connected to $F$-polynomials we are considering the mirroring/Danzer/power complex $M\Delta$ of a cell complex $\Delta$, which is a cubical complex whose vertices have links isomorphic to $\Delta$. Before listing the definition of this cubical complex, we define what we mean by a cell complex in our setting.

	\begin{defn} \label{celldef} ~\\
		\vspace{-2mm}
		\begin{enumerate}
			\item By a \textbf{cell complex}, we mean a collection of subsets of the vertex set satisfying the formal properties of a polyhedral complex (Definition 5.1 on p. 127 of \cite{Zie}) or the (polar) dual of one (analogous to polar duals of polytopes, defined in Part 2) or a modification of one by one of the following transformations:

			\begin{itemize}
				\item A direct generalization of \textbf{Tchebyshev subdivisions} from Definition 2.1 on p. 573 of \cite{He2} (see Definition \ref{tchebdef} and Remark \ref{tchebdivgencom} for intuitive idea and comments on generalizations)
				
				\item Suppose that we have a proper coloring of the 1-skeleton of the given (dual of) a polyhedral complex and that each vertex is assigned a $\pm$ sign. A \textbf{``signed unused color complex''} keeps track of $\pm$ signs of vertices on unused colors of a face of the starting complex (Definition \ref{sigunuscol}). Note that the number of colors used will be fixed as part of the initial conditions. \\
			\end{itemize}
			
			\color{black} 
			\item  We will take the \textbf{dual} $A^*$ of a cell complex $A$ of dimension $D - 1$ to mean the cell complex where a face of dimension $j$ corresponds to the collection of facets of $A$ that contain a particular $(D - j - 1)$-dimensional face of $A$. This is an analogue of the polar dual of a polytope. \\
		\end{enumerate} 
	\end{defn}

	\begin{rem}
		The definitions above generalize directly to multicomplexes studied in \cite{BFS}. We mention this since the identities of the form $h(\Delta) = f(\Gamma)$ used in our analysis (e.g. for flag Cohen--Macaulay simplicial complexes from \cite{CCV}) also exist for multiplicomplexes.
	\end{rem}
	
	\color{black}

	We now give the definition of the cubical complex mentioned above. 
	
	\color{black} 
	\begin{defn} (a small generalization of the definition in p. 299 of \cite{BBC}, p. 20 -- 21 of \cite{DM}, and p. 10 -- 11 of \cite{DCox}) \label{mirdancom} \\
		Let $T$ be a cell complex. We can think of $T$ (including the empty set) as a partially ordered set (the corresponding simplicial poset) where each face is a 0-1 vector with a 0 in the $i^{\text{th}}$ place if and only if the vertex $i$ belongs to the face. Thus $T$ is partially ordered by $0 > 1$ extended componentwise. \\
		
		Then we construct a partially ordered set $MT$ (called the $\textbf{mirroring}$, $\textbf{Danzer complex}$, or \textbf{power complex $2^T$} of $T$) as follows: \[ MT \coloneq \{ (a_1, \ldots, a_n) : (|a_1|, \ldots, |a_n|) \in T \} \subset I^n, \] partially ordered by $0 > 1$ and $0 > -1$ extended componentwise. Here $I$ is the poset $\{ 0, 1, -1 \}$ with ordering $1 < 0$ and $-1 < 0$. Note that $\dim MT = \dim T + 1$. \\
	\end{defn}

	\begin{rem} \textbf{(Comments on notation) \\} \label{mirdanzgeom}
		The way we will think about the notation above is that placing $1$ or $-1$ in the $i^{\text{th}}$ coordinate refers to intersecting with the hyperplane $x_i = 1$ or $x_i = -1$ defining a facet of the $n$-cube $[-1, 1]^n$ and that the coordinates labeled with a ``0'' are ``free'' in the sense that they are used to span a face of the cubical complex $MT$. In Rowlands' thesis (see p. 4 of \cite{Row}), the ``free'' coordinates are labeled with a ``*'', which may better convey this. As a consequence, the faces of $MT$ with $0$'s in the same coordinates (i.e. coming from the same face of $T$) are parallel to each other. \\
	\end{rem}

	\begin{rem} \textbf{(Flag simplicial spheres to $CAT(0)$ complexes) \\} \label{locflagspherecat}
		The mirroring/Danzer/power complex gives rise to a construction taking flag simplicial spheres to $CAT(0)$ complexes with repeated local structure given by $\Delta$ This ``local structure'' construction is motivated by a concrete construction using flag simplicial spheres $\Delta$ to build $CAT(0)$-complexes with repeated local structure given by $\Delta$ (Proposition 1.2.2 on p. 11 and Proposition 1.2.3 on p. 13 of \cite{DCox}, p. 21 of \cite{DM}). \\
	\end{rem}
	
	\pagebreak 
	
	\color{black}
	\begin{exmp} \textbf{(Intersection patterns of faces on $M \Delta$)} \label{danzintpat} \\
		\vspace{-3mm}
		\begin{enumerate}
			\item Let $[n]$ be the vertex set of $\Delta$. A collection of faces $G_1, \ldots, G_m \in M\Delta$ has a nonempty intersection if and only if the following condition holds: For all $1 \le i \le n$, $(G_k)_i, (G_\ell)_i \ne 0 \Longrightarrow (G_k)_i = (G_\ell)_i$. \\
			
			\item Suppose that the faces $G_1, \ldots, G_m \in MT$ have a nonempty intersection. For each $1 \le i \le m$, let $F_i = \{ (|a_1|, \ldots, |a_n|) : (a_1, \ldots, a_n) \in G_i \}$ (i.e. subset labeled by 0's corresponds to $F_i$). Then, we have that $G_1 \cap \cdots \cap G_m$ has 0's in the same slots as $F_1 \cap \cdots \cap F_m \in \Delta$ and the ``common values'' from Part 1 for coordinates corresponding to elements of the complement $[n] \setminus (F_1 \cap \cdots \cap F_m)$. In particular, this means that $F_1 \cap \cdots \cap F_m \in M \Delta$ since the subset of $[n]$ labeled with 0's is the face $F_1 \cap \cdots \cap F_m \in \Delta$ and the rest of the coordinates are labeled with $\pm 1$. \\
			
			\item In inclusions of faces of $M\Delta$, the changes in coordinates can only come from additional 0's being placed that are induced by inclusions of faces of $\Delta$. Given a face $G \in M\Delta$. This implies the following:

			\begin{itemize}
				\item  $\St_{M\Delta}(G) \cong \St_\Delta(|G|)$, where $|G|$ denotes the face of $\Delta$ obtained after taking the absolute value of each coordinate (i.e. setting the $-1$ coordinates of $G$ equal to 1).
				
				\item The nonzero coordinates of elements of $\St_{M\Delta}(G)$ are determined by those of $G$ (which are indexed by $[n] \setminus |G|$). \\
			\end{itemize}
			
		\end{enumerate}
	\end{exmp}

	Returning to the original question of the interpretation of $P_\Delta(2\alpha + 1)$, we use the following result connecting faces of a cell complex with its mirroring/Danzer/power complex. \\

	\begin{defn} (\textbf{$\widetilde{f}$-polynomial of a poset}, p. 307 of \cite{BBC}) \label{posetfpoly} \\
		Given a poset $P$ such that $\rk(P) = D$, we denote by $\widetilde{f}_i \coloneq \widetilde{f}_i(P)$ the number of elements of rank $i$ and define the polynomial \[ \widetilde{f}(P, t) \coloneq \sum_{i = 0}^D \widetilde{f}_i t^i. \]
	\end{defn}

	\begin{rem} \textbf{(Comments on indexing conventions for simplicial vs. cubical complexes) \\} \label{indexcon}
		We write $\widetilde{f}_i$ in a change from the notation in the original work of Babson--Billera--Chan \cite{BBC} since there may be a difference from the usual definition of the $f$-polynomial of a simplicial complex due to the indexing conventions used there for face posets of simplicial complexes and cubical complexes (see discussion on p. 298 of \cite{BBC}). To be more precise, the empty set is included in the face poset of a simplicial complex while it is \emph{not} included in the face poset of a cubical complex. This is to preserve the property of the order ideals of these face posets being Boolean algebras and products of copies of the face poset of an interval respectively. Another source where the poset $\widetilde{f}$-polynomial is used is on p. 13 of work of Rowlands \cite{Row2} on connections between $CAT(0)$ cubical complexes and flag simplicial complexes (where $f$ is taken to mean $\widetilde{f}$ in our setting). \\

		As a result, we have that $\widetilde{f}_i(S) = f_{i - 1}(S)$ for a simplicial complex $S$ and $\widetilde{f}_j(K) = f_j(K)$ for a cubical complex $K$ when we take $f_\ell(A)$ to mean the number of $\ell$-dimensional faces of a polyhedral complex $A$ as usual. We take a look at what this means for the comparison between \[ \widetilde{f}(P, t) \coloneq \sum_{i = 0}^D \widetilde{f}_i(P) t^i \] and the ``usual'' $f$-polynomial \[ f_A(t) \coloneq \sum_{i = 0}^D f_{i - 1}(A) t^i. \]
		
		Since $\widetilde{f}_i(S) = f_{i - 1}(S)$ for a simplicial complex $S$, we actually have that $\widetilde{f}(S, t) = f_S(t)$. However, we need to make a modification for a cubical complex $K$ since $\widetilde{f}_i(K) = f_i(K)$ for a cubical complex $K$. In that case, we have that
		
		\begin{align*}
			\widetilde{f}(K, t) &= \sum_{i = 0}^D \widetilde{f}_i(K) t^i \\
			&= \sum_{i = 0}^D f_i(K) t^i \\
			&= f_0(K) + f_1(K) t + f_2(K) t^2 + \ldots + f_D(K) t^D \\
			\Longrightarrow t \widetilde{f}(K, t) &= f_0(K) t + f_1(K) t^2 + f_2(K) t^3 + \ldots + f_D(K) t^{D + 1} \\
			&= f_D(K) - 1 \\
			\Longrightarrow f_K(t) &= t \widetilde{f}(K, t) + 1. 
		\end{align*}
		
	\end{rem}
	
	Using this notation, there is a clean way to connect the $\widetilde{f}$-polynomials of a cell complex $T$ and its mirroring/Danzer complex $MT$. \\

	\begin{prop} (extension of p. 302 of \cite{BBC}) \label{posetfdanzer} \\
		Suppose that we include the empty set in the face poset of a cell complex $T$ and that it is \emph{not} included in the face poset of $MT$. If every $(k - 1)$-dimensional cell of $T$ has $k$ vertices, we have that \[ \widetilde{f}(MT, t) = 2^{n(T)} \widetilde{f} \left( T, \frac{t}{2} \right), \] the notation of Definition \ref{posetfpoly} where $n(T)$ denotes the number of vertices in $T$. \\
		
		In general, we have that \[ \widetilde{f}(MT, t) = 2^{n(T)} \widetilde{S} \left(T, \frac{t}{2} \right), \] where $\widetilde{S}(T, x) = \sum_{i = 0}^{n(T)} \widetilde{s}_i x^i$ with $\widetilde{s}_i \coloneq \# \{ F \in T : |F| = i \}$. 
	\end{prop}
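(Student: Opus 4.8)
The plan is to count the faces of $MT$ by dimension through an explicit parametrization in terms of the faces of $T$ together with sign data. Write $[n]$ for the vertex set of $T$, so $n = n(T)$, and adopt the conventions of the statement: the empty face is counted in the face poset of $T$ but not in that of $MT$, and $\widetilde f_j(MT) = f_j(MT)$ as in Remark \ref{indexcon}. By Definition \ref{mirdancom} a face of $MT$ is a vector $(a_1, \ldots, a_n) \in I^n$ with $(|a_1|, \ldots, |a_n|) \in T$; the coordinates equal to $0$ form exactly the vertex set of the face $F := (|a_1|, \ldots, |a_n|)$ of $T$, while each of the remaining $n - |F|$ coordinates takes an arbitrary value in $\{+1,-1\}$, and conversely every such choice yields a face of $MT$. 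Since a vector is recovered from its zero-pattern and its sign-pattern, this gives a bijection between faces of $MT$ and pairs $(F,\sigma)$ with $F \in T$ and $\sigma \colon [n]\setminus F \to \{+1,-1\}$.

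Next I would read off the dimension: by Remark \ref{mirdanzgeom} the coordinates labelled $0$ are the ``free'' directions spanning the cube-face, while each coordinate fixed to $\pm 1$ restricts to one facet-defining hyperplane of $[-1,1]^n$, so the face of $MT$ attached to $(F,\sigma)$ has dimension $|F|$. Writing $\widetilde s_j(T) = \#\{F \in T : |F| = j\}$, grouping the above bijection according to the common dimension $|F| = j$ therefore gives
\[ \widetilde f_j(MT) \;=\; \sum_{\substack{F \in T\\ |F| = j}} 2^{\,n - |F|} \;=\; 2^{\,n - j}\,\widetilde s_j(T), \]
and multiplying by $t^j$ and summing over $j$ yields $\widetilde f(MT,t) = \sum_j 2^{n-j}\widetilde s_j(T)\, t^j = 2^{n(T)}\,\widetilde S(T, t/2)$, which is the general assertion.

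Finally, under the hypothesis that every $(k-1)$-dimensional cell of $T$ has exactly $k$ vertices, a face of $T$ with $j$ vertices is precisely a rank-$j$ element of the face poset of $T$ (with the empty face of rank $0$), so $\widetilde s_j(T) = \widetilde f_j(T)$ in the notation of Definition \ref{posetfpoly}, hence $\widetilde S(T,x) = \widetilde f(T,x)$ and $\widetilde f(MT,t) = 2^{n(T)}\widetilde f(T, t/2)$. I expect the only real friction to be the bookkeeping between the two indexing conventions — rank versus dimension, and whether the empty face is included — together with confirming that a face of $MT$ has dimension equal to its number of free coordinates; both are settled by Remarks \ref{indexcon} and \ref{mirdanzgeom}, and the small-cell hypothesis enters only in the last step to replace $\widetilde S$ by $\widetilde f$.
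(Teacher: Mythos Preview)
Your proof is correct. The paper itself does not supply a proof of this proposition --- it is recorded as a straightforward extension of the face-count on p.~302 of \cite{BBC} and left unproved --- but your direct counting argument (bijecting faces of $MT$ with pairs $(F,\sigma)$ where $F\in T$ and $\sigma:[n]\setminus F\to\{\pm 1\}$, reading off the dimension as $|F|$ via Remark~\ref{mirdanzgeom}, and summing) is exactly the standard argument one would expect, and the bookkeeping with the two indexing conventions is handled correctly via Remark~\ref{indexcon}.
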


	\subsubsection{Colorings, Tchebyshev subdivisions, and intervals} \label{approachback}

	The main aim of this section is to connect inverted Chebyshev expansions associated to gamma polynomials of flag simplicial spheres to cell complex structures. Our main tools are the compatibility between Tchebyshev subdivisions and $F$-polynomials from Section \ref{tchebtypeAtoB} and the $h$-vector itself being equal to the $f$-vector of an auxiliary simplicial complex in this setting (p. 470 of \cite{CCV}, Theorem 1 on p. 23 and Section 6.4 on p. 33 of \cite{BFS}). One can think about the type $A$ to type $B$ transitions in terms of converting information about ``Boolean structures'' (e.g. face posets of simplicial complexes) into those about cubical complexes via repeated edge subdivisions. We start by considering flag simplicial spheres $\Delta$ where $\gamma(\Delta) = f(T)$ (e.g. barycentric subdivisions of simplicial spheres \cite{NPT}). This will introduce transformations of cubical complexes compatible with those of the polynomials we study (e.g. translations) that describe the simplicial to cubical complex transitions in a more concrete way. \\

	Note that having $h(\Delta) = f(\Gamma)$ is a necessary condition in order for this occur (Proposition 6.2 on p. 1377 of \cite{NPT}). When $\Delta$ is a flag sphere, this equality holds and $\Gamma$ can be taken to be a balanced simplicial complex. Combining this with the compatibility between Tchebyshev transformations and $F$-polynomials, we obtain our main formula connecting the gamma polynomial to $F$-polynomials of an auxiliary cell complex coming from signed colorings of $\Gamma$ (Corollary \ref{invtchebsub}). The $F$-polynomial is the translation by -1 of the generating function for the number of faces of a cubical complex whose local structure is this determined by this auxiliary cell complex. The motivation comes a certain construction that converts flag simplicial spheres $\Delta$ into $CAT(0)$-complexes whose local structure is given by $\Delta$ (Proposition 1.2.2 on p. 11 and Proposition 1.2.3 on p. 13 of \cite{DCox}, p. 21 of \cite{DM}, p. 302 of \cite{BBC}). \\
	
	Finally, we use this formula to show new positivity properties involving the reciprocal polynomial of the gamma polynomial that hold for arbitrary flag spheres and that the $f$-polynomial of this cubical complex ``formally'' behaves like a cubical version of a barycentric subdivision (where the word ``formally'' can be removed when $\gamma(\Delta) = f(T)$) (Corollary \ref{simpgamdanz}). Note that the this comes from a translation, which is also related to transformations between $F$-polynomials and $f$-polynomials. This is followed by some further context involving earlier work studying connections between $CAT(0)$ cubical complexes and flag simplicial complexes. \\
	
	\color{black}
	Now suppose that $\gamma(\Delta) = f(T)$ for some simplicial complex $T$ of dimension $\le \frac{d}{2} - 1$. By Theorem \ref{gamchebinv}, we have that \[ g_\Delta(2w) = (2w + 2)^{ \frac{d}{2} } \gamma_\Delta \left( \frac{1}{2w + 2} \right) = (2w + 2)^{ \frac{d}{2} } f_T \left( \frac{1}{2w + 2} \right). \]

	\begin{align*}
		g_\Delta(2w) &= \sum_{i = 0}^{ \frac{d}{2} } f_{i - 1}(T)(2w + 2)^{ \frac{d}{2} - i} \\
		&= \sum_{i = 0}^{ \frac{d}{2} } f_{i - 1}(T) 2^{ \frac{d}{2} - i } (w + 1)^{ \frac{d}{2} - i } \\
		&= \sum_{i = 0}^{ \frac{d}{2} } \widetilde{f}_i(T) 2^{ \frac{d}{2} - i } (w + 1)^{ \frac{d}{2} - i } \\
		&= \sum_{j = 0}^{ \frac{d}{2} } \widetilde{f}_{ \frac{d}{2} - j }(T) 2^j (w + 1)^j \\
		\Longrightarrow [w^k] g_\Delta(2w) &= \sum_{j = k}^{ \frac{d}{2} } \widetilde{f}_{ \frac{d}{2} - j }(T) 2^j \binom{j}{k} \\
		&= \sum_{j = k}^{ \frac{d}{2} } \widetilde{f}_j(T^*) 2^j \binom{j}{k}
	\end{align*}
	
	since $\widetilde{f}_i(S) = f_{i - 1}(S)$ for a simplicial complex $S$ by Remark \ref{indexcon}. \\

	We note that this has some resemblance to first cubical barycentric subdivision of a simplicial complex mentioned defined by Hetyei (see \cite{Hetinv}, which is defined below). We will also give a generalization of Babson--Billera--Chan \cite{BBC} which puts it in context of well-known subdivisions. \\
	
	\begin{defn} (Hetyei, D\'efinition 6 on p. 45 and Remarque 5 on p. 45 -- 46 of \cite{Hetinv}) \label{cubbary} \\
		The \textbf{first cubical barycentric subdivision} of a simplicial complex $S$ of dimension $D - 1$ is the cubical complex $\mathcal{C}(S)$ defined as follows:
		
		\begin{enumerate}
			\item The set of vertices of $\mathcal{C}(S)$ is the set of nonempty faces of $S$. 
			
			\item  The faces of $\mathcal{C}(S)$ are the intervals of the face poset of $S$, that is all sets of the form \[ [\sigma, \tau] \coloneq \{ \lambda \in S : \sigma \subset \lambda \subset \tau \}. \]
		\end{enumerate}
		
		We can express the $f$-vector $(f_{-1}^\Box(S), \ldots, f_{D - 1}^\Box(S))$ of the complex $\mathcal{C}(S)$ in terms of the $f$-vector $(f_{-1}(S), \ldots, f_{d - 1}(S))$ of $S$ as follows: \[ f_k^\Box(S) = \sum_{j = k}^{d - 1} \binom{j + 1}{k} f_j(S) \]
		
	\end{defn}
	
	\begin{rem} \textbf{(Comparison with the formula for $[w^k] g(2w)$ given if $\gamma(\Delta) = f(T)$) \\}
		\vspace{-3mm}
		\begin{enumerate}
			\item Comparing this to the formula for $[w^k] g(2w)$ obtained when $\Gamma(\Delta) = f(T)$, we see that it uses $\binom{j}{k}$ instead of $\binom{j + 1}{k}$ and has an input that might not be a simplicial complex. Note that the $k$-dimensional faces of $MA$ come from $(k - 1)$-dimensional faces of $A$ (see Definition \ref{mirdancom}). 
			
			\item The cubical barycentric subdivision assumes that both endpoints of the interval considered are contained in $S$. If we drop the condition that $S$ is a simplicial complex, we're simply considering endpoints $[\sigma, \tau]$ where the upper endpoint is a $j$-dimensional face of $S$ and $\tau$ is some subset of $\sigma$ (which may or may not be a face of $S$). 
		\end{enumerate}
	\end{rem}
	
	A generalization by Babson--Billera--Chan \cite{BBC} is defined below.
	
	\begin{defn} (Babson--Billera--Chan, Section 2.3 on p. 303 -- 304 of \cite{BBC}) \\
		Given a poset $P$, its \textbf{barycentric cover} $KP$ is the poset with elements the order relations of $P$, partially ordered by inclusion (i.e. $(u \le v) \le (x \le y)$ (see Definition \ref{intposetdef}) if and only if $x \le u \le v \le y$). Note that $KP = K^{\text{op}}$ (see p. 298 of \cite{BBC}) and $K(P \times Q) = K(P) \times K(Q)$. 
	\end{defn}
	
	Here are some properties of the barycentric cover $KP$.
	
	\begin{prop} (Babson--Billera--Chan, Proposition 2.2 on p. 303 of \cite{BBC}) \label{barcover} \\
		If $P$ has Boolean intervals, then $KP$ is a cubical poset. This includes simplicial and cubical posets (more generally, face posets of polyhedral complexes with simple, nonempty cells) as well as their duals. Further, if $\widehat{P}$ ($P$ with $\widehat{0}$ and $\widehat{1}$ adjoined) is a lattice, then so is $\widehat{KP}$. \\
	\end{prop}
	
	This explains the ``cubical'' part and the induced triangulation of cubes in $KP$ induced by the barycentric subdivision give the cubical barycentri subdivsion name (see top of p. 304 of \cite{BBC}). More information on structural properties of $KP$ are listed below. \\
	
	\begin{thm} (Babson--Billera--Chan, Theorem 2.3 and Corollary 2.4 on p. 303, p. 298 of \cite{BBC})
		\begin{enumerate}
			\item Given a poset $Q$, let $|Q|$ be the (simplicial) complex of chains in $P$ (i.e. the order complex). Then, $|KP|$ gives a polyhedral subdivision of $|P|$ by the map taking the point $(x \le x)$ to itself and the point $(x < y)$ in $|KP|$ to the midpoint of the edge $x < y$ in $|P|$, and extending linearly over every closed simplex in $|KP|$. \\
			
			\item If $P$ is the face poset of a polyhedral complex, then $KP$ is the face poset of a polyhedral subdivision of $P$, lying between $P$ and $|P|$ in the refinement order. 
		\end{enumerate}
	\end{thm}

	These interval constructions are closely connected to Tchebyshev subdivisions by recent work of Hetyei \cite{He3}.

	\begin{defn} (Definition 8 on p. 924 of \cite{He3}) \label{intposetdef} \\
		An \textbf{interval} $[u, v]$ in a partially ordered set $P$ is the set of all elements $w \in P$ satisfying $u \le v \le w$. For a finite partially ordered set $P$, we define the \textbf{poset $I(P)$ of the nonempty intervals of $P$} as the set of all intervals $[u, v] \subset P$, ordered by inclusion.  
	\end{defn}
	
	The connection to Tchebyshev subdivisions comes from taking order complexes.
	
	\begin{thm} (Walker--Joji\'c--Hetyei, Theorem 9 on p. 925 of \cite{He3}) \label{intchebsubdiv} \\
		For any finite partially ordered set $P$, the order complex $\Delta(I(P))$ of its poset of nonempty intervals is isomorphic to a Tchebyshev triangulation of $\Delta(P)$ as follows: \\
		
		For each $u \in P$, we identify the vertex $[u, u] \in \Delta(I(P))$ with the vertex $u \in \Delta(P)$ and for each nonsingleton interval $[u, v] \in I(P)$, we identify the vertex $[u, v] \in \Delta(I(P))$ with the midpoint of the edge $\{ [u, u], [v, v] \}$. We number the midpoints $[u_1, v_1], [u_2, v_2], \ldots$ in such a way that $i < j$ holds whenever the interval $[u_i, v_i]$ contains the interval $[u_j, v_j]$. \\
	\end{thm}

	We have a similar situation for intervals of graded posets.
	
	\begin{defn} (Definition 10 on p. 926 of \cite{He3}) \label{gradintposetdef} \\
		For a graded poset $P$ we define its \textbf{poset of intervals $\widehat{I}(P)$} as the poset of all intervals of $P$, \textbf{including the empty set}, ordered by inclusion.
	\end{defn}
	
	\begin{prop} (Hetyei, Proposition 12 on p. 927 of \cite{He3}) \\
		Let $P$ be a graded poset and $\widehat{I}(P)$ be its poset of intervals. Then the order complex $\Delta(\widehat{I}(P) -  \{ \emptyset, [\widehat{0}, \widehat{1}] \} )$ is a Tchebyshev triangulation of the suspension of $\Delta(P - \{ \widehat{0}, \widehat{1} \} )$. 
	\end{prop}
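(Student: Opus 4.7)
The plan is to deduce this from the preceding Walker--Joji\'c--Hetyei theorem by showing that deleting the maximal interval $[\widehat{0},\widehat{1}]$ from $\widehat{I}(P)$ geometrically cuts $\Delta(P)$ open along the edge $\{\widehat{0},\widehat{1}\}$, and that this cut converts $\Delta(P)$ into the suspension of $\Delta(P - \{\widehat{0},\widehat{1}\})$. First I would note that $\widehat{I}(P) = I(P) \cup \{\emptyset\}$ with $\emptyset$ adjoined as a unique minimum, so
\[ \Delta(\widehat{I}(P) - \{\emptyset,[\widehat{0},\widehat{1}]\}) = \Delta(I(P) - \{[\widehat{0},\widehat{1}]\}), \]
the induced subcomplex of $\Delta(I(P))$ obtained by deleting a single vertex. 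Second, since $\widehat{0}$ and $\widehat{1}$ are comparable to every element of $P$, the order complex decomposes as a join $\Delta(P) = \Delta(P - \{\widehat{0},\widehat{1}\}) * \Delta(\{\widehat{0},\widehat{1}\})$, and removing the open star of the $1$-simplex $\{\widehat{0},\widehat{1}\}$ replaces the second factor by $\partial\Delta(\{\widehat{0},\widehat{1}\}) = S^0$, yielding $\Sigma\Delta(P - \{\widehat{0},\widehat{1}\})$. Third, I would apply the preceding theorem to realize $\Delta(I(P))$ as a Tchebyshev triangulation of $\Delta(P)$ under the identification $[u,u] \mapsto u$, with $[u,v]$ for $u < v$ mapped to the midpoint of the edge $\{u,v\}$.

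The key combinatorial observation is that a simplex of $\Delta(I(P))$, i.e.\ a nested chain $[u_1,v_1] \subsetneq \cdots \subsetneq [u_k,v_k]$, corresponds to the chain $u_k \le \cdots \le u_1 \le v_1 \le \cdots \le v_k$ in $P$ and therefore lives inside the $\Delta(P)$-simplex $\{u_k,\ldots,u_1,v_1,\ldots,v_k\}$. Such a nested chain contains $[\widehat{0},\widehat{1}]$ if and only if some $[u_i,v_i] = [\widehat{0},\widehat{1}]$; since $[\widehat{0},\widehat{1}]$ is the unique maximum of $I(P)$, this forces it to be the outermost interval $[u_k,v_k]$, hence $u_k = \widehat{0}$ and $v_k = \widehat{1}$. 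Conversely, if the ambient $\Delta(P)$-simplex contains both $\widehat{0}$ and $\widehat{1}$, then $u_k = \widehat{0}$ (since $\widehat{0}$ is the minimum of $P$) and $v_k = \widehat{1}$ (since $\widehat{1}$ is the maximum), so $[u_k,v_k] = [\widehat{0},\widehat{1}]$ belongs to the chain. Thus the open star of $[\widehat{0},\widehat{1}]$ in the subdivision coincides exactly with the subdivided open star of $\{\widehat{0},\widehat{1}\}$ in $\Delta(P)$, and removing it produces a subdivision of $\Sigma\Delta(P - \{\widehat{0},\widehat{1}\})$. To verify that this subdivision is itself a Tchebyshev triangulation, I would check that the surviving non-singleton intervals $[u,v] \neq [\widehat{0},\widehat{1}]$ match exactly the midpoints of edges of the suspension---edges of $\Delta(P - \{\widehat{0},\widehat{1}\})$ together with the cone edges $[\widehat{0},v]$ for $v \neq \widehat{1}$ and $[u,\widehat{1}]$ for $u \neq \widehat{0}$---and that the numbering of midpoints inherited from the Tchebyshev triangulation of $\Delta(P)$ still satisfies the monotonicity required by the previous theorem.

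The main obstacle is the star-identification step, and in particular ruling out a nested chain of intervals whose underlying $P$-chain contains both $\widehat{0}$ and $\widehat{1}$ yet fails to contain $[\widehat{0},\widehat{1}]$ as a vertex. The obstruction dissolves once one observes that the nesting of intervals forces $u_k$ and $v_k$ to be the extreme elements of the corresponding $P$-chain, but this two-directional correspondence must be spelled out explicitly to guarantee that the restricted triangulation is genuinely Tchebyshev rather than merely some subdivision of the suspension.
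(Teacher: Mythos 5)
The paper does not prove this proposition: it is quoted verbatim as Hetyei's Proposition 12 of \cite{He3}, so there is no in-paper argument to compare against. Judged on its own, your derivation from the Walker--Joji\'c--Hetyei theorem is correct. The two pillars both hold: (i) since $\widehat{0}$ and $\widehat{1}$ are comparable to everything, $\Delta(P)$ is the join of $\Delta(P-\{\widehat{0},\widehat{1}\})$ with the closed $1$-simplex on $\{\widehat{0},\widehat{1}\}$, so its antistar of that edge is exactly the suspension; and (ii) a chain $[u_1,v_1]\subsetneq\cdots\subsetneq[u_k,v_k]$ omits $[\widehat{0},\widehat{1}]$ precisely when its carrier chain $u_k\le\cdots\le u_1\le v_1\le\cdots\le v_k$ omits the edge $\{\widehat{0},\widehat{1}\}$, which you argue correctly in both directions (the converse direction, that $\widehat{0}$ and $\widehat{1}$ in the carrier must sit at $u_k$ and $v_k$, is exactly the point that needed to be made). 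The remaining verification you flag --- that a sequence of stellar edge subdivisions of $\Delta(P)$ restricts on the subcomplex $\mathrm{Ast}(\{\widehat{0},\widehat{1}\})$ to the corresponding sequence for that subcomplex (the omitted edge $\{\widehat{0},\widehat{1}\}$ simply acts trivially there), and that the induced numbering of the surviving intervals is still a linear extension of reverse containment --- is routine and closes the argument, so the restricted complex is genuinely a Tchebyshev triangulation and not merely some subdivision of the suspension.
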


	\begin{rem} \textbf{(Structure of $g_\Delta(2w)$ expression after substituting in $f(T)$ for some simplicial complex $T$) \\} \label{fmatchstrat}
		We take a page from p. 13 of Rowlands' paper \cite{Row2}, which shows that $\widetilde{f}(\Box_P, t) = \widetilde{f}(\Delta_P, t + 1)$. Note that any flag complex is the crossing complex $\Delta_P$ of a $PIP$ $P$ (Lemma 3.3 on p. 11 of \cite{Row2}) and any rooted $CAT(0)$ cubical complex is of the form $\Box_P$ for some $P$ (Theorem 2.9 on p. 9 of \cite{Row2}). The proof uses a bijection between dimension $i$ elements of $\Box_P$ and pairs $(A, M)$, where $A$ is the set of consistent antichains of $P$ and $M \subset A$ is a subset with $|M| = i$. The identity follows after making a change in the order of summation. \\

		Another way to say this is that \[ [w^k] \widetilde{f} \left( T(A), \frac{w - 1}{2} \right)  = \sum_{j = k}^{ \frac{d}{2} } \widetilde{f}_j(T^*) 2^j \binom{j}{k}, \] where $A$ is cell complex related to unused colors of faces in a balanced simplicial complex with an assigned coloring. Note that the last expression makes sense since a $(k - 1)$-face of $T(A)$ has $k$ vertices (see Definition \ref{Tchebdivgen}). \\

	\end{rem}
	
	\color{black}

	We can apply the setup above to give an interpretation of $P_\Delta(2 \alpha + 1)$, where \[ P_\Delta(u) \coloneq h_{ \frac{d}{2} }(\Delta) + 2 \sum_{j = 1}^{ \frac{d}{2} } h_{ \frac{d}{2} - j }(\Delta) u^j. \] The motivation is to show that $P_\Delta(u)$ is the $F$-polynomial of some cell complex $A$. Since $f_S(x) = F_S(2x + 1)$, this is equivalent to showing that $P_\Delta(2 \alpha + 1)$ is the $f$-polynomial of some cell complex $A$. As it turns out, the Tchebyshev subdivisions studied by Hetyei \cite{He2} can be generalized to arbitrary cell complexes (in the sense of Definition \ref{celldef}). This implies that $g(2w) = F_{T(A)}(w)$. Note that $g(2w) = (2w + 2)^{ \frac{d}{2} } \gamma \left( \frac{1}{2w + 2} \right)$. \\

	Before moving on, we first record a construction that is used to express $P$ in terms of a modification of the $f$-vector of some cell complex. \\

	\begin{defn} \textbf{(Signed unused color complex) \\} \label{sigunuscol}
		Let $\Gamma$ be a $(D - 1)$-dimensional balanced simplicial complex. Fix a $D$-coloring of the $1$-skeleton of $\Gamma$. We define the \textbf{signed unused color complex} $\mathcal{D}(\Gamma)$ of $\Gamma$ to be the cell complex below. 
		
		\begin{itemize}
			\item The vertices are of the form $\{ q \} \subset \{ q \} \subset C_F$ or $\emptyset \subset \{ q \} \subset C_F$, where $C_F \subset [D]$ is the set of colors unused by $F$. In the first case, we will denote a vertex by $v^+$ and will write $v^-$ in the second case. Note that $C_\emptyset = [D]$ since none of the colors are used. 
			
			\item The $(k - 1)$-faces of the form $B \subset Q \subset C_F$, where $C_F$ is the set of colors in $[D]$ \emph{not} used by $F$, $Q \subset C_F$ is a $k$-element subset, and $B \subset Q$ is an arbitrary subset. This is like considering ``signed'' $k$-element subsets of $C_F$. Note that $C_F \ne \emptyset$ since $Q$ is nonempty for the cases we will consider. 
			
			\item We have an inclusion of faces $(B_1 \subset Q_1 \subset C_{F_1}) \le (B_2 \subset Q_2 \subset C_{F_2})$ if and only if $F_1 \supset F_2$, $B_1 \subset B_2$, and $Q_1 \setminus B_1 \setminus Q_2 \setminus B_2$. \\
			
		\end{itemize}
		
	\end{defn}
	
	Next, we describe when a collection of vertices of $\mathcal{D}(\Gamma)$ forms a face of $\mathcal{D}(\Gamma)$.
	
	\begin{prop} \label{sigunusvertunion}
		Let $\Gamma$ be a balanced simplicial complex of dimension $D - 1$. A collection of vertices $v_1^{ \varepsilon_1 }, \ldots, v_k^{\varepsilon_k} \in V(\mathcal{D}(\Gamma))$ with each $v_i$ in a triple with a face $F_i \in \Gamma$ and sign $\varepsilon_i = \pm 1$ forms a face of $\mathcal{D}(\Gamma)$ using the union of set of unused colors $C_i$ of the $F_i$ if and only if one of the following conditions (in the notation of Definition \ref{sigunuscol}) holds:
		
		\begin{enumerate}
			\item $F_1 \cap \cdots \cap F_k \ne \emptyset$.
			
			\item $F_1 \cap \cdots \cap F_k = \emptyset$ and $C_{i_1} \cup \cdots \cup C_{i_k} = [D]$.
		\end{enumerate}
		
		Note that each $C_i$ is determined by $F_i \in \Gamma$. \\
		
	\end{prop}
	
	\begin{rem}
		The second condition is nontrivial. For example, suppose that $\Gamma$ is the boundary of the octahedron/2-cross polytope (meaning $D = 3$). This is a balanced simplicial complex. Color the vertices $\pm e_i$ with $i$ for $1 \le i \le 3$ by $i$ to get a 3-coloring of its 1-skeleton. Consider the vertex $F_1 = \{ -e_2 \}$ and the edge $F_2 = \{ e_2, e_3 \}$. Then, we have that $C_{F_1} = \{ 1, 3 \}$ and $C_{F_2} = \{ 1 \}$. Since $F_1 \cap F_2 = \emptyset$ and $C_{F_1} \cup C_{F_2} = \{ 1, 3 \}$, the union of any two vertices of $\mathcal{D}(\Gamma)$ associated to these faces cannot form a face of $\mathcal{D}(\Gamma)$. \\
	\end{rem}
	
	\begin{proof}
		Each vertex is of the form $B_i \subset \{ q_i \} \subset C_{F_i}$, where $C_{F_i}$ is the set of unused colors of $F_i \in \Gamma$, $q_i \in C_{F_i}$, and $B_i = \{ q_i \}$ or $\emptyset$. Using the inclusion relation in Definition \ref{sigunuscol}, we have that the union $\{ v_1^{ \varepsilon_1 }, \ldots, v_k^{\varepsilon_k} \}$ is given by $B_1 \cup \cdots \cup B_k \subset \{ q_1, \ldots, q_k \} \subset (C_1 \cup \cdots \cup C_k)_{F_1 \cap \cdots \cap F_k}$. If $F_1 \cap \cdots \cap F_k \ne \emptyset$, this forms a face of $\mathcal{D}(\Gamma)$ since $(P \cap Q)^c = P^c \cup Q^c$ for $P, Q \subset [D]$. Now suppose that $F_1 \cap \cdots \cap F_k = \emptyset$. As mentioned in Definition \ref{sigunuscol}, we have that $C_\emptyset = [D]$. This is only possible if $C_{F_1} \cup \cdots \cup C_{F_k} = [D]$. \\
	\end{proof}

	We can use this cell complex to connect $P_\Delta(u)$ to the $f$-vector of some cell complex. \\
	
	\color{black}

	\color{black} 
	\begin{thm} \label{danzinput}
		Let $\Delta$ be a flag simplicial homology sphere of even dimension $d$ and \[ P_\Delta(u) \coloneq h_{ \frac{d}{2} }(\Delta) + 2 \sum_{j = 1}^{ \frac{d}{2} } h_{ \frac{d}{2} - j }(\Delta) u^j, \] which is $g(2u)$ with $T_k(u)$ replaced by $u^k$ (i.e. $g(2u) = T(P(u))$ and $P(u) = T^{-1} g(2u)$). \\

		Given $k \ge 1$, there is a $\left( \frac{d}{2} - 1 \right)$-dimensional balanced simplicial complex $\Gamma$ such that

		\[ [\alpha^k] \frac{ P_\Delta(2 \alpha + 1) + f_{ \frac{d}{2} - 1 }(\Gamma) }{2} = f_{k - 1} (\mathcal{D}(\Gamma))  \] for $k \ge 1$ and
		
		\color{black} 
		\[ \frac{ P_\Delta(2 \alpha + 1) + f_{ \frac{d}{2} - 1 }(\Gamma) }{2} - \sum_{j = 0}^{ \frac{d}{2} } f_ { \frac{d}{2} - j - 1 } (\Gamma) + 1 = f_{\mathcal{D}(\Gamma)}(\alpha). \]
		
		Substituting in $\alpha = \frac{\beta - 1}{2}$, we have that
		
		\[ \frac{ P_\Delta(\beta) + f_{ \frac{d}{2} - 1 }(\Gamma) }{2} - \sum_{j = 0}^{ \frac{d}{2} } f_ { \frac{d}{2} - j - 1 } (\Gamma) + 1 = F_{\mathcal{D}(\Gamma)}(\beta), \]
		
		where \[ F(u) \coloneq \sum_{i = 0}^D f_{i - 1}(A) \left( \frac{u - 1}{2} \right)^i \] for a cell complex $A$ of dimension $D - 1$.

	\end{thm}
	
	\color{black}
	
	\begin{proof}
		Since $\Delta$ is a flag simplicial homology sphere, it is also a flag Cohen--Macaulay simplicial complex (e.g. see p. 21 of \cite{Athsom}). Then, a combining a result of Caviglia--Constantinescu--Varbaro (Corollary 2.3 on p. 474 of \cite{CCV}) with that of Bj\"orner--Frankl--Stanley ($a = (1, \ldots, 1)$ case of Theorem 1 on p. 23 and Corollary from Section 6.4 on p. 33 of \cite{BFS}) implies that $h(\Delta)$ is the $f$-vector of a balanced $\left( \frac{d}{2} - 1 \right)$-dimensional simplicial complex $\Gamma$. Note that the indexing convention for $f$-vectors on p. 24 and Section 6.4 on p. 33 of \cite{BFS} differs from the usual one for the dimensions of faces of simplicial complexes and is closer to those of cubical complexes if empty sets were to be included. We will use the usual convention for $f$-vectors of simplicial complexes by indexing the faces by dimensions instead of number of vertices. From now on, we will write $\Gamma$ to mean that $\left( \frac{d}{2} - 1 \right)$-skeleton of this simplicial complex. \\
		
		Substituting in $x = 2 \alpha + 1$, we have

		\begin{align*}
			\frac{P_\Delta(2\alpha + 1) + h_{\frac{d}{2}}(\Delta)}{2} &= \frac{P_\Delta(2\alpha + 1) + f_{\frac{d}{2} - 1}(\Gamma)}{2} \\
			&= f_{\frac{d}{2} - 1}(\Gamma) + \sum_{j = 1}^{ \frac{d}{2} } f_{ \frac{d}{2} - j - 1 }(\Gamma) (2\alpha + 1)^j \\
			&= \sum_{j = 0}^{ \frac{d}{2} } f_{ \frac{d}{2} - j - 1 }(\Gamma) (2\alpha + 1)^j
		\end{align*}

		This implies that
		
		\begin{align*}
			[\alpha^k] \frac{P_\Delta(2\alpha + 1) + h_{\frac{d}{2}}(\Delta)}{2} &= [\alpha^k] \frac{P_\Delta(2\alpha + 1) + f_{\frac{d}{2} - 1}(\Gamma)}{2} \\
			&= \sum_{j = k}^{ \frac{d}{2} }   f_{ \frac{d}{2} - j - 1 }(\Gamma) 2^k \binom{j}{k} \\
			&= 2^k \sum_{j = k}^{ \frac{d}{2} }   f_{ \frac{d}{2} - j - 1 }(\Gamma) \binom{j}{k}
		\end{align*}
		
		for $k \ge 1$ and the constant term of \[ \frac{P_\Delta(2\alpha + 1) + h_{\frac{d}{2}}(\Delta)}{2} = \frac{P_\Delta(2\alpha + 1) + f_{\frac{d}{2} - 1}(\Gamma)}{2} \] is 
		
		\begin{align*}
			[\alpha^0] \frac{P_\Delta(2\alpha + 1) + h_{\frac{d}{2}}(\Delta)}{2} &= [\alpha^0] \frac{P_\Delta(2\alpha + 1) + f_{\frac{d}{2} - 1}(\Gamma)}{2} \\
			&= \sum_{j = 0}^{ \frac{d}{2} }   f_{ \frac{d}{2} - j - 1 }(\Gamma).
		\end{align*}
		
		\color{black}
		Given $k \ge 1$, we claim that the $f$-vector of the complex $\mathcal{D}(\Gamma)$ from Definition \ref{sigunuscol} has dimensions given by the sum above. Given a $\left( \frac{d}{2} - j - 1 \right)$-dimensional face $F \in \Gamma$, there are $j$ unused colors forming the set $C_F \subset \left[ \frac{d}{2} \right]$. Choosing $k$ of them gives the subset $Q \subset C_F$ from Definition \ref{sigunuscol} and the number of pairs $(F, Q) \in \Gamma \times \left[ \frac{d}{2} \right]$ with $\dim F = \frac{d}{2} - j - 1$ and $Q \subset C_F$ is counted by the term $f_{ \frac{d}{2} - j - 1 }(\Gamma) \binom{j}{k}$ in the sum. Assigning a $\pm$ signs to each element of $Q$ gives the multiplication by $2^k$.
		
	\end{proof}

	\color{black}

	We record the (extended) $F$-polynomial definition to give some geometric context.

	\begin{defn} (Definition 3.2 on p. 578)
		Given a cell complex (Definition \ref{celldef}) $A$ of dimension $D - 1$, the \textbf{$F$-polynomial} of $A$ is \[ F_A(x) \coloneq \sum_{j = 0}^D f_{j - 1}(A) \left( \frac{x - 1}{2} \right)^j. \]
	\end{defn}

	Note that the $\widetilde{f}$-polynomial of the mirroring/Danzer complex $MA$ of $A$ has a close connection to the $F$-polynomial of $A$.

	\begin{prop} \label{Fpolytodanz}
		Given a cell complex $A$ such that $(k - 1)$-dimensional faces have $k$ vertices, we have that \[ F_A(x) = \frac{1}{2^{n(A)}} \widetilde{f}(MA, x - 1), \] where $MA$ is the mirroring/Danzer complex associated to $S$ (Definition \ref{mirdancom}). \\
		
		Equivalently, we have that \[ \widetilde{f}(MA, x) = 2^{n(A)} F_A(x + 1). \]
		
		Since $f_K(t) = t \widetilde{f}(K, t) + 1$ for a cubical complex $K$ by the discussion in Remark \ref{indexcon}, this implies that \[ f(MA, x) = 2^{n(A)} x F_A(x + 1) + 1 \] for such a cell complex $A$.
	\end{prop}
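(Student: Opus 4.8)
The plan is to reduce everything to Proposition~\ref{posetfdanzer} together with the index-convention bookkeeping of Remark~\ref{indexcon}. The first observation is that, under the standing hypothesis that every $(k-1)$-dimensional face of $A$ has exactly $k$ vertices, the rank of a face in the face poset of $A$ (with $\emptyset$ adjoined) equals its number of vertices, so $\widetilde{f}_i(A) = f_{i-1}(A)$ for all $i$; in particular the degree-$0$ term is $f_{-1}(A) = 1$. Consequently the $F$-polynomial is just a rescaled $\widetilde{f}$-polynomial of $A$:
\[ F_A(x) = \sum_{j=0}^D f_{j-1}(A) \left( \frac{x-1}{2} \right)^j = \sum_{j=0}^D \widetilde{f}_j(A) \left( \frac{x-1}{2} \right)^j = \widetilde{f} \left( A, \frac{x-1}{2} \right). \]

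Next I would invoke Proposition~\ref{posetfdanzer}: since the $(k-1)$-dimensional cells of $A$ have $k$ vertices, that proposition gives $\widetilde{f}(MA, t) = 2^{n(A)} \widetilde{f}(A, t/2)$, with the empty set counted in the face poset of $A$ but not in that of $MA$. Substituting $t = x-1$ and comparing with the identity for $F_A$ above yields
\[ \widetilde{f}(MA, x-1) = 2^{n(A)} \widetilde{f} \left( A, \frac{x-1}{2} \right) = 2^{n(A)} F_A(x), \]
which is the first assertion after dividing by $2^{n(A)}$; replacing $x$ by $x+1$ gives the equivalent form $\widetilde{f}(MA, x) = 2^{n(A)} F_A(x+1)$.

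Finally, for the last formula I would use that $MA$ is a cubical complex (Definition~\ref{mirdancom}, Remark~\ref{mirdanzgeom}) and apply the conversion $f_K(t) = t \widetilde{f}(K, t) + 1$ from Remark~\ref{indexcon} with $K = MA$, obtaining $f(MA, x) = x \widetilde{f}(MA, x) + 1 = 2^{n(A)} x F_A(x+1) + 1$. The only real point of friction is keeping the two indexing conventions consistent — retaining the empty face in the poset of $A$ so its degree-$0$ coefficient matches $f_{-1}(A) = 1$, while discarding it for $MA$, exactly as in the hypotheses of Proposition~\ref{posetfdanzer} — and I expect no genuine obstacle beyond this bookkeeping.
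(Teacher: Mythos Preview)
Your proposal is correct and follows essentially the same route as the paper's proof: both reduce to Proposition~\ref{posetfdanzer}, identify $F_A(x)$ with $\widetilde{f}(A,(x-1)/2)$ via the indexing conventions of Remark~\ref{indexcon}, substitute $t=x-1$, and then apply the cubical conversion $f_K(t)=t\,\widetilde{f}(K,t)+1$ for the final formula. Your explicit justification that $\widetilde{f}_i(A)=f_{i-1}(A)$ via the vertex-count hypothesis is a bit more careful than the paper's one-line assertion, but the argument is otherwise identical.
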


	\begin{rem}
		We can write a generalization applying to arbitary cell complexes using Proposition \ref{posetfdanzer} if we replace the $\widetilde{f}$-polynomial with the $\widetilde{S}$-polynomial parametrized by the number of faces with a given number of \emph{vertices} (instead of dimension). \\
	\end{rem}
	
	\color{black}

	\begin{proof}
		In spite of the indexing issues discussed in Remark \ref{indexcon}, the $\widetilde{f}$-polynomial of a cell complex from Definition \ref{posetfpoly} is the same as its usual $f$-polynomial. Now recall from Proposition \ref{posetfdanzer} that \[ \widetilde{f}(MT, t) = 2^n \widetilde{f} \left( T, \frac{t}{2} \right) \] for a cell complex $T$ with $n$ vertices such that $(k - 1)$-dimensional faces have $k$ vertices. \\
		
		Since $F_A(x) = f_A \left( \frac{x - 1}{2} \right)$ for a cell complex $A$, this means that
		
		\begin{align*}
			\widetilde{f}(MA, t) &= 2^n \widetilde{f} \left( A, \frac{t}{2} \right) \\
			&= 2^n f \left( A, \frac{t}{2} \right) \\
			\Longrightarrow \widetilde{f}(MA, x - 1) &= 2^n f \left( A, \frac{x - 1}{2} \right) \\
			&= 2^n F_A(x) \\
			\Longrightarrow F_A(x) &= \frac{1}{2^n} \widetilde{f}(MA, x - 1)
		\end{align*}
		
		after substituting in $t = x - 1$.
		
	\end{proof}

	The main application of the $F$-polynomial of a simplicial complex $\Delta$ in \cite{He2} is its compatibility with Tchebyshev transformations $T : \mathbb{R}[x] \longrightarrow \mathbb{R}[x]$ sending $x^n \mapsto T_n(x)$ (the $n^{\text{th}}$ Chebyshev polynomial of the first kind) since $T(F(\Delta))(x) = F_{T(\Delta)}(x)$ (Proposition 3.3 on p. 578 of \cite{He2}). As it turns out, the definition of a Tchebyshev subdivision and the formula for its $f$-polynomial (in the usual sense) do \emph{not} depend on $\Delta$ being a simplicial complex. We will list the generalizations below and comment on why this is the case for the relevant definition and result. \\

	\begin{defn} \textbf{(Cell complex generalization of Definition 2.1 on p. 573 of \cite{He2})} \label{Tchebdivgen} \\
		
		Let $A$ be (the dual of) a cell complex from Definition \ref{celldef}. \\
		
		Given a cell complex $A$ on a vertex set $V$, fix a linear order $<$ on $V$ and introduce a new element $\widehat{1}$ larger than all elements of $V$. The \textbf{Tchebyshev triangulation $T_< A$} of $A$ is the cell complex on the vertex set \[ \{ (u, v) : u, v \in V, u < v, \{ u, v  \} \subset F \text{ for some } F \in A  \} \cup \{ (u, \widehat{1}) : u \in V \} \] 
		
		whose $(k - 1)$-faces are all sets $\{ (u_1, v_1), \ldots, (u_k, v_k) \}$ satisfying $v_1 < \cdots < v_k$ and the following conditions:
		\begin{enumerate}
			\item The set $\{ u_1, v_1, u_2, v_2, \ldots, u_k, v_k \} \setminus \{ \widehat{1} \}$ is a face of $A$. 
			
			\item  For all $i \le k - 1$, either $u_i = u_{i + 1}$ or $v_i \le u_{i + 1}$ holds. 
		\end{enumerate}
		
		Since the $f$-vector of $T_<(A)$ is independent of the ordering $<$, we will often drop the ordering and simply write $T(A)$. \\
	\end{defn}

	\begin{rem} \textbf{(Connection between definitions and comments on generalization to cell complexes) \\} \label{tchebdivgencom}
		\vspace{-3mm}
		\begin{enumerate}
			\item This is an adaptation of Definition 2.1 on p. 573 of \cite{He2} by Hetyei which specializes to the original definition when $A$ is a simplicial complex. We can think of pairs of actual vertices of $A$ as vertices subdividing the corresponding edges and the pairs ending with $\widehat{1}$ correspond to vertices of the original cell complex $A$. The remaining conditions ensure that the ``realization'' lies inside some face of $A$ and we're moving forward with respect to an edge ordering. Note that we think about faces of the subdivision in terms of which ``side'' of each edge we're on. This is used later in the $f$-vector count of Tchebyshev subdivisions and is reminiscent of how one thinks about faces in barycentric subdivisions. \\

			While a more intuitive definition matching the description above more directly is given in Definition 2 on p. 921 of \cite{He3}, we use the one above since it generalizes to arbitary cell complexes. For example, 1-dimensional faces in the dual of a cell complex may have more than one vertex unless the starting cell complex gives rise to a pseudomanifold without boundary (in which case every ridge is contained in 2 facets in the original cell complex -- see Definition 3.15 on p. 24 of \cite{St}). \\
			
			\item  While each face $G$ of $T(A)$ of dimension $k - 1$ has the ``right'' number of vertices to be a $(k - 1)$-dimensional face of a simplicial complex, $T(A)$ itself isn't necessarily a simplicial complex. This is because removing the underlying set of a vertex of $G$ (given by a singleton or a pair) from the face of $A$ given by the union of the underlying sets of the vertices of $G$ might not yield a face of $A$ if $A$ isn't a simplicial complex. \\
			
			\item We have that $\dim T(A) + 1$ is the maximum number of vertices of a face of $A$. This is because a union of vertices of $A$ coming from $k$ vertices of the form $(u_i, v_i)$ has $m \in [k, 2k]$ elements (e.g. see the proof of Theorem 3.1 on p. 577 -- 578 of \cite{He2}). For example, Definition \ref{sigunuscol} implies that $\dim T(\mathcal{D}(\Gamma)) = \frac{d}{2} - 1$ since any face of $\mathcal{D}(\Gamma)$ has $\le \frac{d}{2}$ vertices. \\ 
		\end{enumerate}
	\end{rem}

	\begin{thm} \textbf{(Cell complex generalization of Theorem 3.1 on p. 577 and Proposition 3.3 on p. 578 -- 579 of \cite{He2} by Hetyei)} \label{Ftchebgen} \\
		Let $A$ be a cell complex (Definition \ref{celldef}). \\
		\begin{enumerate}
			\item (Theorem 3.1 on p. 577 -- 578 of \cite{He2}) \\
			For $k \ge 1$, the number of $(k - 1)$-faces of of $T(A)$ is given by \[ f_{k - 1}(T(A)) = \sum_{j = k}^{2k} f_{j - 1}(A) \cdot 2^{2k - j - 1} \left(  \binom{k}{2k - j} + \binom{k - 1}{2k - j}  \right). \]
			
			\item  (Proposition 3.3 on p. 578 -- 579 of \cite{He2}) \\
			The Tchebyshev transform of the $F$-polynomial of $A$ is the $F$-polynomial of its Tchebyshev triangulation: \[ T(F_A)(x) = F_{T(A)}(x). \]
			
		\end{enumerate}
	\end{thm}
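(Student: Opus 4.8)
The plan is to follow Hetyei's original arguments for Theorem 3.1 and Proposition 3.3 of \cite{He2}, isolating at each step exactly where the hypothesis that $A$ is a simplicial complex is used and checking that it is inessential.

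For Part 1 (the $f$-vector formula) I would fix $k \ge 1$ and classify the $(k-1)$-faces $G = \{(u_1,v_1),\ldots,(u_k,v_k)\}$ of $T(A)$, with $v_1 < \cdots < v_k$, according to the face $F \coloneq \{u_1,v_1,\ldots,u_k,v_k\}\setminus\{\widehat 1\}$ of $A$ over which they lie (condition (1) of Definition \ref{Tchebdivgen}). For a fixed $F$ with $|F| = j$ vertices, counting the admissible sequences $(u_i,v_i)$ whose underlying vertex sets union to $F$ is a purely combinatorial problem on the linearly ordered set $F \cup \{\widehat 1\}$: reading the $v_i$ in increasing order one records, for each consecutive pair, whether $u_i = u_{i+1}$ (the ``doubled edge'' case) or $v_i \le u_{i+1}$ (the ``separated'' case of condition (2)), together with how the terminal pair interacts with $\widehat 1$ and the $2^{2k-j-1}$ side choices. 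This is exactly Hetyei's count and yields $2^{2k-j-1}\bigl(\binom{k}{2k-j}+\binom{k-1}{2k-j}\bigr)$ admissible sequences over $F$, a quantity depending only on $k$ and $j$. Summing over $j$ from $k$ to $2k$ (the range forced by $k$ pairs occupying $m \in [k,2k]$ vertices, Remark \ref{tchebdivgencom}) and over all faces $F$ of $A$ with $j$ vertices gives the asserted identity, where $f_{j-1}(A)$ is read as the number of faces of $A$ with $j$ vertices --- which agrees with the number of $(j-1)$-dimensional faces for the simplicial-type complexes occurring in Theorem \ref{danzinput}, and which for a genuinely polyhedral $A$ one replaces by $\widetilde s_j(A)$ exactly as in Proposition \ref{posetfdanzer}. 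The point is that simpliciality of $A$ enters \emph{only} in making condition (1) pick out faces of $A$; in the local enumeration above $F$ is already assumed to be a face, so the count of sequences over $F$ never asks whether proper subsets of $F$ are faces.

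For Part 2 (compatibility with the Tchebyshev operator) I would deduce the identity formally from Part 1. With $y \coloneq \tfrac{x-1}{2}$ one has $F_A(x) = \sum_j f_{j-1}(A)\,y^j$ and $F_{T(A)}(x) = \sum_k f_{k-1}(T(A))\,y^k$, while $T$ acts by $x^n \mapsto T_n(x)$. Hetyei's argument shows that the binomial kernel appearing in Part 1 is precisely the expansion that arises when one applies $T$ to $y^j$ and re-expands in powers of $y$ (a classical Chebyshev identity); substituting the Part 1 formula into $F_{T(A)}(x)$ and interchanging the order of summation then collapses the double sum to $T(F_A)(x)$. Every manipulation here is an identity of polynomials --- linearity of $T$, the binomial coefficients from Part 1, and the Chebyshev expansion --- so once Part 1 is established nothing further uses that $A$ is simplicial.

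I expect the main obstacle to be the bookkeeping in Part 1: verifying that Hetyei's case analysis --- overlaps $u_i = u_{i+1}$ versus separations $v_i \le u_{i+1}$, the special behaviour of the terminal symbol $\widehat 1$, and the $2^{2k-j-1}$ side choices --- goes through verbatim for an arbitrary cell complex, and in particular confirming that condition (1) of Definition \ref{Tchebdivgen} is the unique place where one could conceivably need $A$ closed under passage to sub-faces, and that it is not needed for the count over a fixed face $F$. A secondary point to get right is the indexing convention (dimension $j-1$ versus ``$j$ vertices'') so that the stated formula is literally correct for the cell complexes --- such as $\mathcal{D}(\Gamma)$ and $T(\mathcal{D}(\Gamma))$ --- that appear in the applications.
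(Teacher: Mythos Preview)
Your proposal is correct and follows essentially the same approach as the paper: the paper's argument (given in the Remark immediately after Theorem \ref{Ftchebgen}) likewise observes that Hetyei's enumeration over a fixed underlying face $F$ of $A$ never invokes closure under passage to subfaces, and that Part 2 is a purely formal consequence of the $f$-vector identity in Part 1. Your write-up is in fact somewhat more detailed than the paper's, explicitly flagging the dimension-versus-vertex-count indexing issue and the $\widetilde{s}_j$ replacement for genuinely polyhedral $A$, which the paper handles separately in Proposition \ref{posetfdanzer} and Remark \ref{tchebdivgencom}.
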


	\begin{rem} \textbf{(Comments on generalizations) \\}
		\vspace{-3mm}
		\begin{enumerate}
			\item For Part 1, the proof of Theorem 3.1 on p. 577 -- 578 of \cite{He2} doesn't use anything about the property defining an abstract simplicial complex (subsets of a face of the given complex being a face). In other words, the only input comes from the ordering properties defining Tchebyshev subdivisions and pairs of vertices contained in a given face of $A$ from Definition \ref{Tchebdivgen}. After starting out by selecting a face to use for the set \[ \{ u_1, v_1, u_2, v_2, \ldots, u_k, v_k \} \setminus \{ \widehat{1} \}, \] \color{black} the rest of the work has to do what terms to pair together and in what order the pairs should be listed. The main structure of the proof has to do with the smallest face of the original cell complex $A$ that ``contains'' the given face of the Tchebyshev subdivision $T(A)$ and keeping track of possible slots for $u_i$'s and $v_j$'s (labeled with $+$ and $-$ signs), and how many strings with a given set of signs satisfy condition b) in Part 1 of Definition \ref{Tchebdivgen}. This is sort of similar to how we keep track of faces on a barycentric subdivision. \\
			
			\item The identity in Part 2 is purely a consequence of the formula in Part 1.
		\end{enumerate}
		
	\end{rem}
	
	Combining these generalizations with Theorem \ref{danzinput} implies the following:
	
	\begin{cor} \label{invtchebsub}
		Recall that \[ g_\Delta(u) \coloneq h_{ \frac{d}{2} }(\Delta) + 2 \sum_{j = 1}^{ \frac{d}{2} } h_{ \frac{d}{2} - j }(\Delta) T_j \left( \frac{u}{2} \right). \]
		
		We have that \[ \frac{g_\Delta(2w) + f_{ \frac{d}{2} - 1 }(\Gamma) }{2} - \sum_{i = 0}^{ \frac{d}{2} } f_{ \frac{d}{2} - j - 1 }(\Gamma) + 1  = F(T(\mathcal{D}(\Gamma)), w) = \frac{1}{2^{ n(T(\mathcal{D}(\Gamma))) }  } \widetilde{f}(M(T(\mathcal{D}(\Gamma)), w - 1), \] where the last equality follows from Proposition \ref{Fpolytodanz}. \\
	\end{cor}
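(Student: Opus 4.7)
The plan is to chain together three previously established identities: Theorem \ref{danzinput} (which expresses $F_{\mathcal{D}(\Gamma)}(\beta)$ in terms of $P_\Delta(\beta)$ plus constants), the compatibility of the Tchebyshev transform with the $F$-polynomial from Theorem \ref{Ftchebgen}(2), and the relation between the $F$-polynomial of a cell complex and the $\widetilde{f}$-polynomial of its Danzer mirror from Proposition \ref{Fpolytodanz}.

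First I would rewrite the identity from Theorem \ref{danzinput} as an equality of polynomials in the indeterminate $\beta$, namely
\[
\frac{P_\Delta(\beta) + f_{d/2-1}(\Gamma)}{2} - \sum_{j=0}^{d/2} f_{d/2-j-1}(\Gamma) + 1 = F_{\mathcal{D}(\Gamma)}(\beta),
\]
and then apply the Tchebyshev transform $T\colon\mathbb{R}[x]\to\mathbb{R}[x]$, $x^n\mapsto T_n(x)$, to both sides. Since $T_0=1$, $T$ fixes the constant terms on the left (the $f_{d/2-1}(\Gamma)/2$, the sum, and the $+1$). For the non-constant piece, Theorem \ref{gamchebinv} gives exactly $T(P_\Delta)(w)=g_\Delta(2w)$, because $P_\Delta(u)=h_{d/2}(\Delta)+2\sum h_{d/2-j}(\Delta)u^j$ while $g_\Delta(2w)=h_{d/2}(\Delta)+2\sum h_{d/2-j}(\Delta)T_j(w)$. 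On the right-hand side, the cell-complex generalization of Hetyei's compatibility (Theorem \ref{Ftchebgen}(2)) applied to the cell complex $A=\mathcal{D}(\Gamma)$ yields $T(F_{\mathcal{D}(\Gamma)})(w) = F_{T(\mathcal{D}(\Gamma))}(w)$. Combining these, I obtain the first equality:
\[
\frac{g_\Delta(2w) + f_{d/2-1}(\Gamma)}{2} - \sum_{j=0}^{d/2} f_{d/2-j-1}(\Gamma) + 1 = F_{T(\mathcal{D}(\Gamma))}(w).
\]

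For the second equality, I would apply Proposition \ref{Fpolytodanz} to the cell complex $A=T(\mathcal{D}(\Gamma))$. Its hypothesis is that every $(k-1)$-dimensional face has exactly $k$ vertices. This is precisely the content of Remark \ref{tchebdivgencom}(2): every face of a Tchebyshev triangulation $T(A)$ has the ``right'' vertex count even when $A$ itself is not simplicial, because the vertices of $T(A)$ are ordered pairs $(u_i,v_i)$ and the definition of a $(k-1)$-face specifies exactly $k$ such pairs. Thus Proposition \ref{Fpolytodanz} applies and gives
\[
F_{T(\mathcal{D}(\Gamma))}(w) = \frac{1}{2^{n(T(\mathcal{D}(\Gamma)))}}\,\widetilde{f}\bigl(M(T(\mathcal{D}(\Gamma))),\,w-1\bigr),
\]
completing the chain.

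The only subtle points are bookkeeping: I need to make sure that the constants on the left of Theorem \ref{danzinput} really are constants in $\beta$ (they are, since $\Gamma$ is fixed) so they pass through $T$ unchanged, and I need to verify that the cell-complex versions of Theorem \ref{Ftchebgen} and Proposition \ref{Fpolytodanz} genuinely apply to $\mathcal{D}(\Gamma)$ and to $T(\mathcal{D}(\Gamma))$ respectively. The former is immediate because $\mathcal{D}(\Gamma)$ is a cell complex in the sense of Definition \ref{celldef}, and the latter, as noted, is handled by Remark \ref{tchebdivgencom}(2). No heavy computation is needed beyond these substitutions, so I expect the main obstacle to be purely notational: aligning the two different index conventions (the $\widetilde{f}$-indexing for cell/cubical complexes versus the usual $f$-indexing) when quoting Proposition \ref{Fpolytodanz}.
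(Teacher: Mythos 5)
Your proposal is correct and follows essentially the same route the paper intends: the paper presents this corollary as the direct combination of Theorem \ref{danzinput}, the cell-complex version of Hetyei's compatibility $T(F_A)(x)=F_{T(A)}(x)$ from Theorem \ref{Ftchebgen}(2), and Proposition \ref{Fpolytodanz}, which is exactly the chain you carry out (including the needed observation, via Remark \ref{tchebdivgencom}, that $(k-1)$-faces of $T(\mathcal{D}(\Gamma))$ have $k$ vertices so that Proposition \ref{Fpolytodanz} applies). Your explicit verification that $T$ fixes the constant terms and that $T(P_\Delta)(w)=g_\Delta(2w)$ is the same bookkeeping the paper leaves implicit.
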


	\color{black}

	We will now consider the Tchebyshev triangulation (Definition \ref{Tchebdivgen}) of the signed unused color complex from Definition \ref{sigunuscol}.

	\color{black}
	
	Going back to the sums under consideration, recall from Corollary \ref{invtchebsub} that
	
	\begin{align*}
		\frac{g_\Delta(2w) + f_{ \frac{d}{2} - 1 }(\Gamma) }{2} - \sum_{i = 0}^{ \frac{d}{2} } f_{ \frac{d}{2} - i - 1 }(\Gamma) + 1 &= f \left( T(\mathcal{D}(\Gamma)), \frac{w - 1}{2} \right) \\
		&= \frac{1}{2^{n(T(\mathcal{D}(\Gamma)))}} \widetilde{f}(M(T(\mathcal{D}(\Gamma))), w - 1).
	\end{align*}

	The coefficient of the degree $k$ term in \[ R(w) \coloneq f \left( T(\mathcal{D}(\Gamma) ), \frac{w}{2} \right) \] is $f_{k - 1}(T( \mathcal{D}(\Gamma) ) ) 2^{-k}$. A $k$-tuple of vertices of $T(\mathcal{D}(\Gamma))$ forming a face of $\mathcal{D}(\Gamma)$ has between $k$ and $2k$ choices of signs to make for each vertex of the Tchebyshev subdivision from possible sizes in a cell complex $A$ of the union of vertices of $A$ coming from $k$ vertices of of the Tchebyshev subdivision $T(A)$ (analogous to considerations in the proof of Theorem 3.1 on p. 577 -- 578 of \cite{He2}). This bound comes from having $2$ sign choices for every ``valid'' singleton of the form $(u, \widehat{1})$ from Definition \ref{Tchebdivgen} and $4$ sign choices when we have a valid pair $(u, v)$. This means that we have $2^m$ sign choices for some $k \le m \le 2k$. Effectively, what this means is that we're considering $k$-tuples consisting of unsigned pairs of the form $(u, \widehat{1})$ and pairs $(u, v)$ where only the second coordinate is signed. Equivalently, $u$ and $v$ must have the same sign. This is a sort of ``complex of one-sided intervals'' $\widetilde{T}(\mathcal{D}(\Gamma))$ (a modification of the Tchebyshev subdivision $T(\mathcal{D}(\Gamma))$ of $\mathcal{D}(\Gamma)$ which we define below (with relevant parts reproduced from Definition \ref{Tchebdivgen}). \\

	\begin{defn}
		A vertex $v = (B \subset \{ q \} \subset C_F)$ of $\mathcal{D}(\Gamma)$ has \textbf{positive} sign if $B = \{ q \}$ and it has \textbf{negative} sign if $B = \emptyset$. We denote these cases using the notation $\sgn(v) = 1$ and $\sgn(v) = -1$ respectively.
	\end{defn}
	
	\begin{itemize}
		\item The vertex set of $\widetilde{T}(\mathcal{D}(\Gamma))$ is \[ \{ (u, v) \in V(\mathcal{D}(\Gamma)) : u < v, \{ u, v  \} \subset F \text{ for some } F \in \mathcal{D}(\Gamma) \] and \[ \sgn(u) = \sgn(v) \} \cup \{ (u, \widehat{1}) : u \in V(\mathcal{D}(\Gamma)) \}. \]
		
		\item The $(k - 1)$-faces of $\widetilde{T}(\mathcal{D}(\Gamma))$ are all the sets $\{ (u_1, v_1), \ldots, (u_k, v_k) \}$ satisfying $v_1 < \cdots < v_k$ and the following conditions:
		\begin{enumerate}
			\item The set $\{ u_1, v_1, u_2, v_2, \ldots, u_k, v_k \} \setminus \{ \widehat{1} \}$ is a face of $\mathcal{D}(\Gamma)$.
			
			\item  For all $i \le k - 1$, either $u_i = u_{i + 1}$ or $v_i \le u_{i + 1}$ holds. \\
		\end{enumerate}
	\end{itemize}
	
	\color{black}

	\color{black}

	The expression in Remark \ref{fmatchstrat} is \[ g_\Delta(2w) = \sum_{j = 0}^{ \frac{d}{2} } f_{ \frac{d}{2} - j - 1 }(T) 2^j (w + 1)^j = \sum_{j = 0}^{ \frac{d}{2} } f_{ j - 1 }(T^*) 2^j (w + 1)^j. \]
	
	In some sense, this indicates that the mirroring/Danzer complex of the Tchebyshev subdivision $T(\lk_\Delta(e_k) \big\backslash \left( \left( \bigcup_{i = 1}^{k - 1} \lk_\Delta(e_i) \right) \cap \lk_\Delta(e_k) \right))$ of $\mathcal{D}(\Gamma)$ is the interval poset of the simplicial complex $T$ such that $\gamma(\Delta) = f(T)$. \\
	
	This would be equivalent to having \[ [w^k] g_\Delta(2w) = \sum_{j = k}^{ \frac{d}{2} } f_{j - 1}(T^*) 2^j \binom{j}{k} \]

	Note that the terms above also count signed intervals where the upper bound is assigned a sign and the lower one (from a subset of size $k$) is unsigned (equivalently inheriting signs from the top one). \\

	So far, we have shown that 
	
	\begin{align*}
		[w^k] \left(  \frac{g_\Delta(2w) + f_{ \frac{d}{2} - 1 }(\Gamma) }{2} - \sum_{i = 0}^{ \frac{d}{2} } f_{ \frac{d}{2} - i - 1 }(\Gamma) + 1  \right) &= [w^k] F(T(\mathcal{D}(\Gamma) ), w) \\ 
		&=  [w^k] R(w - 1) \\ 
		&= [w^k] A(w),
	\end{align*} 
	
	where $A$ denotes the polynomial $A(w)$ such that $R(w) = A(w + 1)$ ($R(w)$ itself defined below Corollary \ref{invtchebsub}). We can take $A(w)$ to have nonnegative coefficients if $\gamma(\Delta) = f(T)$. \\
	
	If we substitute $w = \mu + 1$, we have
	
	\begin{align*}
		[\mu^k] \left(  \frac{g_\Delta(2(\mu + 1)) + f_{ \frac{d}{2} - 1 }(\Gamma) }{2} - \sum_{i = 0}^{ \frac{d}{2} } f_{ \frac{d}{2} - i - 1 }(\Gamma) + 1  \right) &= [\mu^k] F(T(\mathcal{D}(\Gamma) ), \mu + 1) \\ 
		&=  [\mu^k] R(\mu) \\ 
		&= [\mu^k] A(\mu + 1) \\ 
		&= \sum_{j = k}^{ \frac{d}{2} } A_{j - 1} \binom{j}{k},
	\end{align*}
	
	where $A_{j - 1}$ denotes the coefficient of $x^j$ in $A(x)$. Note that \[ [\mu^k] R(\mu) = [\mu^k] f \left( T(\mathcal{D}(\Gamma) ), \frac{\mu}{2} \right) = f_{k - 1}(T(\mathcal{D}(\Gamma))) 2^{-k} = f_{k - 1}(\widetilde{T}(\mathcal{D}(\Gamma))) \] and 
	
	\begin{align*}
		g_\Delta(2w) &= (2w + 2)^{ \frac{d}{2} } \gamma_\Delta \left( \frac{1}{2w + 2} \right) \\ 
		\Longrightarrow  g_\Delta(2(\mu + 1)) &= (2(\mu + 1) + 2)^{ \frac{d}{2} } \gamma_\Delta \left( \frac{1}{2(\mu + 1) + 2} \right) \\ 
		&= (2(\mu + 2))^{ \frac{d}{2} } \gamma_\Delta \left( \frac{1}{2(\mu + 2)} \right) \\
		&= (2\mu + 4)^{ \frac{d}{2} } \gamma_\Delta \left( \frac{1}{2\mu + 4} \right) .
	\end{align*}
	
	After multiplying by 2, we have 
	
	\begin{align*}
		[\mu^k] \left(  g_\Delta(2(\mu + 1)) + f_{ \frac{d}{2} - 1 }(\Gamma) - 2\sum_{i = 0}^{ \frac{d}{2} } f_{ \frac{d}{2} - i - 1 }(\Gamma) + 1  \right) &= 2 [\mu^k] F(T(\mathcal{D}(\Gamma) ), \mu + 1) \\ 
		&= 2 [\mu^k] R(\mu) \\ 
		&= 2 \cdot 2^k [\mu^k] f(\widetilde{T}(\mathcal{D}(\Gamma)), \mu) \\
		&= 2 \cdot 2^k [\mu^k] \widetilde{A}(\mu + 1) \\ 
		&= 2 \cdot 2^k \sum_{j = k}^{ \frac{d}{2} } \widetilde{A}_{j - 1} \binom{j}{k},
	\end{align*}

	where $\widetilde{A}(x) = f(\widetilde{T}(\mathcal{D}(\Gamma)), x - 1)$ and $\widetilde{A}_{j - 1}$ denotes the coefficient of $x^j$ in $\widetilde{A}(x)$. The multiplication by $2^k$ means that we put $\pm$ signs on each face (alternatively put signs on an extraneous element $\widehat{1}$ for each face). \\
	
	\color{black}
	Suppose that $\widetilde{A}(x)$ has nonnegative coefficients. To obtain a cell complex $T^*$ where \[ [w^0] g_\Delta(2w) = 2 \sum_{j = 0}^{ \frac{d}{2} } f_{j - 1}(A) +  \left(f_{ \frac{d}{2} - 1 }(\Gamma) + 2 \sum_{i = 1}^{  \frac{d}{2} - 1 } f_{  \frac{d}{2} - i - 1 }(\Gamma)  \right) = \sum_{j = 0}^{ \frac{d}{2} } f_{j - 1}(T^*), \]
	
	it suffices to formally ``add'' $\left( f_0(\Gamma^*) + 2 \sum_{i = 1}^{  \frac{d}{2} - 1 } f_i(\Gamma^*) \right)$ disjoint points since $f_0(T^*)$ does \emph{not} appear in the sums determining $[w^k] g_\Delta(2w)$ for any $k \ge 1$. \\
	
	\begin{cor} \label{simpgamdanz} ~\\
		\vspace{-3mm}
		\begin{enumerate}

			\item Suppose that $h(\Delta)$ is the $h$-vector of a flag sphere of odd dimension $d - 1$ (i.e. $d$ is even). If $d \ge 4$, then the polynomial $g_\Delta(2(u + 1)) = (2(u + 2))^{\frac{d}{2}} \gamma_\Delta \left( \frac{1}{2(u + 2)} \right)$ has nonnegative coefficients. After multiplying by a power of 2, we have the $f$-polynomial of the mirroring/Danzer/power complex $M(T(\mathcal{D}(\Gamma)))$ with a nonnegative constant added to it. \\
			
			\item By the $f_k^\Box(S)$ formula below Definition \ref{cubbary}, substituting the variable $w$ by $u + 1$ sends the $f$-polynomial of a simplicial complex to that of its cubical barycentric subdivision. Consider the $f$-polynomial of the mirroring/Danzer/power complex $M(T(\mathcal{D}(\Gamma)))$. \\
			
			Corollary \ref{invtchebsub} implies that its $f$-polynomial behaves ``formally''  like a double cubical barycentric subdivision associated to some integer tuple. If $\gamma(\Delta) = f(T)$, we can remove the word ``formal'' and take the $f$-polynomial associated to a cell complex after a rescaling as the input.  \\

		\end{enumerate}
		
		\color{black}

	\end{cor}
	
	\color{black}

	Finally, we study some geometric properties connected to nonnegativity of translation by $-1$ (thinking about the gamma vector). Given $MA$, note that the links of the vertices are isomorphic to $A$. The following result shows that $MA$ is a $CAT(0)$ cubical complex if and only if $A$ is a flag simplicial complex and $MA$ is simply connected:
	
	\begin{thm} (Gromov, Section 4 of \cite{Gro} and Theorem 2.3 of \cite{Row2}) \\
		A cubical complex $\Box$ is $CAT(0)$ if and only if it is simply connected and all the vertex links are flag simplicial complexes. Moreover, any $CAT(0)$ cubical complex is contractible.
	\end{thm}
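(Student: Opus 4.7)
The plan is to decompose the statement into three pieces: the \emph{moreover} clause on contractibility, the forward direction of the characterization ($CAT(0)$ implies simply connected with flag vertex links), and its converse. The moreover clause is the easiest: in any $CAT(0)$ space, any two points are joined by a unique geodesic that varies continuously with its endpoints, so fixing a basepoint $x_0 \in \Box$ and setting $H(x, t) := \gamma_{x_0 \to x}(1 - t)$ yields a strong deformation retraction of $\Box$ onto $\{ x_0 \}$. In particular every $CAT(0)$ cubical complex is simply connected, which disposes of half of the forward direction for free.

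For the remaining forward implication I would equip the link $\lk_\Box(v)$ of each vertex $v$ with its natural all-right piecewise spherical metric: each $k$-cube through $v$ contributes an all-right spherical $(k-1)$-simplex to $\lk_\Box(v)$, because cubes meet at right angles at their vertices. The local comparison inequality defining $CAT(0)$ on $\Box$ forces each such link, with this metric, to be $CAT(1)$. I would then appeal to Gromov's lemma that an all-right spherical simplicial complex is $CAT(1)$ exactly when the underlying simplicial complex is flag; contrapositively, an empty simplex (a clique without its filling face) produces a closed local geodesic of length strictly less than $2\pi$ in the link, violating $CAT(1)$.

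For the converse I would invoke the Cartan--Hadamard theorem in the setting of geodesic metric spaces (Bridson--Haefliger): a complete, simply connected length space is globally $CAT(0)$ as soon as it is locally $CAT(0)$. Since Euclidean cubes are themselves $CAT(0)$, checking local $CAT(0)$ on $\Box$ reduces via the link condition of Bridson--Haefliger to showing that every $\lk_\Box(v)$ is $CAT(1)$, which is precisely what the flag hypothesis supplies through the other direction of Gromov's lemma. The main obstacle is this flag $\Longleftrightarrow$ $CAT(1)$ equivalence. I would prove it by induction on dimension: the base case records that an injective closed geodesic of length less than $2\pi$ in a piecewise spherical $2$-complex must traverse three edges of an empty triangle, and the inductive step uses that the link of a vertex inside a flag simplicial complex is itself flag, so that the link-of-link inductive hypothesis applies and produces the required local $CAT(1)$ estimate.
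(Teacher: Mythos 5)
The paper does not prove this statement: it is quoted verbatim as a known result of Gromov (Section 4 of \cite{Gro}, restated as Theorem 2.3 of \cite{Row2}) and is used purely as background for the discussion of $CAT(0)$ cubical complexes and crossing complexes. So there is no internal proof to compare against; your proposal has to be judged as a reconstruction of the classical argument. Your overall decomposition is the standard one (contractibility via geodesic contraction; the link condition reducing local curvature bounds to the vertex links with their all-right piecewise spherical metrics; Cartan--Hadamard for the globalization; Gromov's lemma ``all-right spherical complex is $CAT(1)$ $\Longleftrightarrow$ flag'' as the combinatorial core), and each of those reductions is correct. Two small points you should make explicit: the Cartan--Hadamard step needs $\Box$ to be a complete geodesic space, which for a cubical complex follows from Bridson's theorem on metric complexes with finitely many isometry types of cells; and the contractibility argument requires continuity of geodesics in their endpoints, which is exactly the convexity of the metric in a $CAT(0)$ space, so it is fine but worth naming.

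The one genuine gap is in your sketch of Gromov's lemma itself. Your inductive step --- links of vertices in a flag complex are flag, hence $CAT(1)$ by induction --- only establishes that the all-right complex $L$ is \emph{locally} $CAT(1)$. To conclude that $L$ is globally $CAT(1)$ you must additionally rule out closed geodesics in $L$ of length strictly less than $2\pi$, and this is where essentially all of the work in Gromov's lemma lives: one shows that a short closed geodesic in an all-right spherical complex with $CAT(1)$ vertex links must enter the open star of some vertex, and that the all-right structure forces any geodesic segment crossing such a star to have length at least $\pi$ on each side, yielding a contradiction (this is the argument in Bridson--Haefliger II.5.18 or Davis's appendix \cite{DCox}). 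Your base case gestures at this phenomenon for $2$-complexes but the inductive step as written silently conflates local $CAT(1)$ with $CAT(1)$. Supplying that systole estimate (or invoking the $CAT(1)$ version of the Cartan--Hadamard globalization together with it) would close the argument.
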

	
	The results below imply that $\widetilde{f}(M(T(A)), w) = P(w + 1)$ for some polynomial $P(z)$ with nonnegative coefficients if $T(A)$ is flag.
	
	\begin{lem} (Rowlands, Definition 3.1 on p. 10 and Lemma 3.3 on p. 11 of \cite{Row2}) \\
		Every flag simplicial complex is the crossing complex $\Delta_P$ (associated to a $PIP$ $P$) of some $CAT(0)$ cubical complex.
	\end{lem}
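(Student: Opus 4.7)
The plan is to construct the PIP $P$ explicitly from the data of the flag simplicial complex and then verify that the associated crossing complex recovers it. Let $\Delta$ be a flag simplicial complex with vertex set $V$. I would define $P$ to have underlying set $V$ equipped with the \emph{discrete} partial order (no two distinct elements are comparable), and declare a pair $\{u,v\} \subset V$ to be an inconsistent pair of $P$ if and only if $\{u,v\}$ is \emph{not} an edge of $\Delta$. This is the most economical PIP structure one can associate to $\Delta$: it stores only the $1$-skeleton and no comparability data.

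Next I would verify that $\Delta_P = \Delta$. Because the order on $P$ is discrete, \emph{every} subset of $V$ is an antichain, so the antichains of $P$ are just $2^V$. A subset $F \subseteq V$ is then a \emph{consistent} antichain precisely when no two elements of $F$ form an inconsistent pair, i.e.\ when every pair from $F$ spans an edge of $\Delta$. Since $\Delta$ is flag, this last condition is equivalent to $F \in \Delta$. Hence the consistent antichains of $P$ are exactly the faces of $\Delta$, which gives $\Delta_P = \Delta$.

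The remaining step is to check that the cubical complex $\Box_P$ produced by Rowlands' construction from this particular PIP is actually $CAT(0)$. By the Gromov criterion stated just above the lemma, this reduces to two things: vertex links of $\Box_P$ are flag, and $\Box_P$ is simply connected. The link at any vertex of $\Box_P$ is exactly $\Delta_P = \Delta$ by the previous paragraph, so flagness of the links is immediate from flagness of $\Delta$. Simple connectivity is the only nontrivial point, and this is where I expect the main difficulty: one needs to show that for this specific PIP (discrete order, inconsistency given by non-edges) the cubical complex $\Box_P$ obtained by gluing cubes along consistent order ideals is simply connected. I would handle this either by appealing directly to the general fact in Rowlands' setup that $\Box_P$ is always simply connected for any PIP $P$ (so that the PIP-to-cube-complex machinery is built to land in $CAT(0)$), or by giving a direct filling argument: any loop in $\Box_P$ can be homotoped inside the $2$-skeleton, and squares of $\Box_P$ correspond to pairs of compatible hyperplanes, so one can fill loops by successively ``pushing'' across hyperplanes indexed by elements of $P$, using that the discreteness of the order places no obstruction to doing so.

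Combining the three steps, $\Box_P$ is a $CAT(0)$ cubical complex and $\Delta = \Delta_P$ is its crossing complex, completing the proof. The only substantive obstacle is the simple connectivity verification in the last step; the identification $\Delta_P = \Delta$ is a direct translation between flagness and the PIP consistency condition.
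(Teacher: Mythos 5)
Your construction is exactly the standard one (it is Rowlands' own proof of Lemma 3.3 in \cite{Row2}; the present paper only cites the result without reproving it): taking $P$ to be the discrete order on $V(\Delta)$ with inconsistent pairs the non-edges of $\Delta$, the consistent antichains are precisely the cliques of the $1$-skeleton, which by flagness are the faces of $\Delta$, so $\Delta_P = \Delta$. The simple-connectivity step you flag as the main obstacle is not actually an obstacle: by the Ardila--Owen--Sullivant theorem quoted immediately after the lemma (Theorem 2.5 of \cite{AOS}, Theorem 2.9 of \cite{Row2}), $P \mapsto \Box_P$ is a bijection from PIPs onto rooted $CAT(0)$ cubical complexes, so $\Box_P$ is automatically $CAT(0)$ for \emph{every} PIP $P$ and no direct filling argument is required.
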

	
	\begin{thm} (Ardila--Owen--Sullivant, Theorem 2.5 on p. 146 of \cite{AOS}, Theorem 2.9 of \cite{Row2}) \\
		The map $P \mapsto \Box_P$ is a bijection from the set of $PIP$s to the set of rooted $CAT(0)$ cubical complexes, with inverse given by $\Box \mapsto P_\Box$. 
	\end{thm}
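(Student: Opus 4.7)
The plan is to establish the bijection by explicitly writing both maps and checking they are mutually inverse. For a PIP $P$, define $\Box_P$ so that its vertices are the consistent order ideals of $P$ (order ideals containing no inconsistent pair), and a $k$-dimensional cube is indexed by a pair $(I, M)$ where $I$ is a consistent order ideal and $M$ is a $k$-element antichain of minimal elements of $P \setminus I$ such that $I \cup M$ remains consistent; its $2^k$ vertices are then the order ideals $I \cup N$ for $N \subseteq M$, with the obvious face structure. Take the empty order ideal $\emptyset$ as the root. Conversely, for a rooted CAT(0) cubical complex $(\Box, v_0)$, let the elements of $P_\Box$ be the hyperplanes of $\Box$ (equivalence classes of parallel edges), ordered by $H_1 \le H_2$ iff every edge-path from $v_0$ crossing $H_2$ also crosses $H_1$, with $\{H_1, H_2\}$ declared inconsistent iff no square of $\Box$ is dual to both.

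The first main step is to verify $\Box_P$ is a rooted CAT(0) cubical complex. By Gromov's criterion as stated above, it suffices to check that $\Box_P$ is simply connected and that vertex links are flag simplicial complexes. The link at $I$ has vertices the elements $p$ that are minimal in $P \setminus I$ with $I \cup \{p\}$ consistent, and a subset $M$ spans a simplex precisely when $I \cup M$ is consistent; since the inconsistency relation of a PIP is binary and upward-closed, consistency of $I \cup M$ is equivalent to pairwise consistency of its elements, which is exactly the flag condition. Simple connectedness follows by induction along a linear extension $p_1 < p_2 < \ldots$ of $P$: at stage $k+1$, the new cubes attached to $\Box_{P_k}$ form a cone over the subcomplex of consistent order ideals $I \subseteq P_k$ for which $I \cup \{p_{k+1}\}$ is consistent, so contractibility (hence simple connectedness) is preserved at each stage.

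The second step verifies the inverse map and the two round-trips. One checks directly that $\le$ on hyperplanes is a partial order and that inconsistency is upward-closed, so $P_\Box$ is indeed a PIP. For $P = P_{\Box_P}$, the hyperplane of $\Box_P$ dual to edges of the form $[I, I \cup \{p\}]$ is canonically labelled by $p$; the induced order matches since $p \le q$ iff $q$ cannot be added to an order ideal without $p$ being present, and the inconsistency relation matches since $\{p, q\}$ fails to bound a common square of $\Box_P$ precisely when $\{p, q\}$ is an inconsistent pair in $P$. For $\Box = \Box_{P_\Box}$, each vertex $v$ corresponds to the set of hyperplanes separating it from $v_0$, which is exactly a consistent order ideal of $P_\Box$; this identification is bijective on $0$-skeleta and extends canonically to the full cube structure by recovering each cube from the finite set of hyperplanes dual to its edges.

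The main obstacle is the simple-connectedness of $\Box_P$, since the inductive step relies on the ``attaching subcomplex'' at each stage being contractible rather than merely connected. The cleanest approach is to prove the stronger statement that $\Box_P$ deformation retracts onto the root by processing one new element at a time in a linear extension, using a direct-limit argument in the infinite case. A secondary subtlety is naturality: to promote the pointwise construction into a genuine bijection of isomorphism classes, one must check that both maps respect the appropriate notion of morphism (order-preserving and inconsistency-preserving maps on the PIP side, root-preserving cubical isomorphisms on the CAT(0) side), which is a routine unpacking of the definitions once the pointwise inverses are in hand.
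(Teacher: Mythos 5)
This theorem is quoted in the paper from Ardila--Owen--Sullivant (and Rowlands) as background and is not proved there, so there is no in-paper argument to compare against. Your outline is essentially a reconstruction of the standard proof of \cite{AOS}: realize $\Box_P$ on consistent order ideals, verify CAT(0)-ness via Gromov's criterion, and recover $P_\Box$ from the hyperplane poset. That is the right strategy, and the round-trip checks you describe are the ones actually needed.

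One step as written is incomplete, and it sits exactly at the load-bearing point (flagness of links). The link of a vertex $I$ of $\Box_P$ is \emph{not} just the set of addable elements $p$ minimal in $P \setminus I$ with $I \cup \{p\}$ consistent; it also contains one direction for each removable element, i.e.\ each $p \in \max(I)$, since the cubes containing $I$ are those of the form $(J, M)$ with $J \subseteq I \subseteq J \cup M$. Your description is correct only for the root $\emptyset$. For a general $I$, a set of link vertices $S = S^- \cup S^+$ (removals and additions) spans a simplex iff $I \cup S^+$ is consistent \emph{and} no element of $S^-$ lies strictly below an element of $S^+$ (otherwise $(I \setminus S^-) \cup S^+$ fails to be an order ideal). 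Both conditions are determined by pairs, so the link is indeed flag, but the mixed pairs must be checked explicitly; as written your argument does not see them. Two smaller caveats: the simple-connectedness induction via a linear extension needs the direct-limit step you mention (and implicitly the standing finiteness hypotheses of \cite{AOS} on order ideals), and in the inverse direction the assertions that the hyperplane relation is antisymmetric and that inconsistency is upward-closed are where the CAT(0)/median-graph structure of the $1$-skeleton actually gets used, so ``one checks directly'' is hiding the substantive part of that half.
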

	
	\begin{thm} (Rowlands, Theorem 3.5 on p. 13 of \cite{Row2}) \\
		Let $P$ be a $PIP$. Then, $\widetilde{f}(\Box_P, t) = \widetilde{f}(\Delta_P, 1 + t)$. \\
	\end{thm}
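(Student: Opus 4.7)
The plan is to follow the bijection-based approach sketched in Remark \ref{fmatchstrat}: enumerate the $i$-dimensional cubes of $\Box_P$ in two stages and then swap the order of summation to convert the resulting sum into $\widetilde{f}(\Delta_P, 1+t)$.

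First I would establish the combinatorial parametrization of the cubes of $\Box_P$ coming from the $PIP$ structure guaranteed by Theorem 2.9 of \cite{Row2}: every $i$-dimensional cube of $\Box_P$ corresponds uniquely to a pair $(A, M)$, where $A$ is a consistent antichain of $P$ and $M \subseteq A$ is a subset of size $i$. Intuitively, $A$ records which consistent state of $P$ the cube sits over, and $M$ picks out the $i$ coordinate directions in which the cube extends. This yields
\[
f_i(\Box_P) \;=\; \sum_{A} \binom{|A|}{i},
\]
where the sum is over all consistent antichains $A$ of $P$.

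Next, using the cubical indexing convention $\widetilde{f}_i(\Box_P) = f_i(\Box_P)$ from Remark \ref{indexcon}, I would substitute the previous identity into the definition of the poset $\widetilde{f}$-polynomial and interchange the order of summation:
\[
\widetilde{f}(\Box_P, t) \;=\; \sum_{i \ge 0} f_i(\Box_P)\, t^i \;=\; \sum_{A} \sum_{i \ge 0} \binom{|A|}{i}\, t^i \;=\; \sum_{A} (1+t)^{|A|}.
\]
Then, by Lemma 3.3 of \cite{Row2}, the faces of the crossing complex $\Delta_P$ are exactly the consistent antichains of $P$, a size-$k$ antichain being a $(k-1)$-face. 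Grouping antichains by size and using the simplicial convention $\widetilde{f}_k(\Delta_P) = f_{k-1}(\Delta_P)$, this becomes
\[
\widetilde{f}(\Box_P, t) \;=\; \sum_{k \ge 0} f_{k-1}(\Delta_P)\, (1+t)^k \;=\; \widetilde{f}(\Delta_P, 1+t),
\]
as claimed.

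The swap-of-summation step is routine binomial-theorem bookkeeping, so the substantive content lies entirely in the opening claim: verifying that $i$-cubes of $\Box_P$ really are parametrized by pairs $(A, M)$ with $A$ a consistent antichain of $P$ and $M \subseteq A$ of size $i$. I expect this to be the main obstacle and the only step requiring unfolding, rather than citing, the explicit Ardila--Owen--Sullivant construction of $\Box_P$ from $P$ and checking that the coordinate directions corresponding to elements of the antichain $A$ are independently "free" in a manner compatible with the cubical structure.
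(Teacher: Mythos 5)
Your argument is correct and follows exactly the route the paper itself sketches in Remark \ref{fmatchstrat} (which is Rowlands' own proof): parametrize $i$-cubes of $\Box_P$ by pairs $(A,M)$ with $A$ a consistent antichain and $M\subseteq A$ of size $i$, sum $\binom{|A|}{i}t^i$, and swap the order of summation to get $\sum_A (1+t)^{|A|}=\widetilde{f}(\Delta_P,1+t)$. The indexing conventions ($\widetilde{f}_i = f_i$ for cubical, $\widetilde{f}_k = f_{k-1}$ for simplicial, with the empty antichain accounting for $f_{-1}(\Delta_P)$) are handled correctly, so nothing further is needed beyond the cited Ardila--Owen--Sullivant description of $\Box_P$ that you already flag as the substantive input.
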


	In the context of Definition \ref{cubbary}, a natural question would be whether $\Box_P$ behaves similarly to a cubical barycentric subdivision of $\Delta_P$. Input simplicial complexes/cell complexes $A$ such that $T(A)$ is flag yield outputs connected to gamma vectors which arise from $f$-vectors of flag simplicial complexes. Note that the $h$-vector of any flag Cohen--Macaulay simplicial complex is the $f$-vector of an auxiliary simplicial complex. This auxiliary simplicial complex can be taken to be $d$-chromatic (1-skeleton has a proper coloring with $d$ colors, see Section 6.4 on p. 33 of \cite{BFS}) and balanced when $h_d(\Delta) \ne 0$ (e.g. when the Dehn--Sommerville relations $h_i(\Delta) = h_{d - i}(\Delta)$ are satisfied). In addition, it was conjectured earlier (e.g. (Conjecture 3.11 on p. 94 of \cite{CN})) that the $h$-vector of every vertex decomposable flag simplicial complex is the face vector of some flag complex . \\

	 \section{Recursive properties involving the $f$-vector perspective based on older decompositions} \label{gamfdec}

	 In Section \ref{approachback}, an important input for connecting the inverted Chebyshev expansion associated to the gamma polynomial of a flag sphere $\Delta$ was the the property $h(\Delta) = f(\Gamma)$ for an auxiliary simplicial complex $\Gamma$. Given this context, it is natural to consider recursive properties of flag simplicial complexes $\Delta$ such that $h(\Delta) = f(\Gamma)$. They will involve common structures occurring in vertex decomposability and edge subdivisions. \\ 
	 
	 To this end, we will consider Boolean decompositions. Note that a property that was studied in the context of possible symmetric Boolean decompositions (see Section 4.7 on p. 84 -- 85 of \cite{Pet}). Another instance where Boolean decompositions appear is in possible structures of simplicial complexes whose $f$-vectors are $h$-vectors of gamma-nonnegative flag spheres. \\

	 \color{black}

	 \begin{prop} (Nevo--Petersen--Tenner, Proposition 6.2 on p. 1377 of \cite{NPT}) \label{fvectbool} \\
	 	If $\gamma(\Delta)$ is the $f$-vector of a simplicial complex $S$, then $h(\Delta)$ is the $f$-vector of the following simplicial complex:
	 	
	 	\[ \Gamma = \{ F \cup G : F \in \mathcal{F}(\gamma(\Delta)), G \in 2^{[d - 2|F|]} \}, \] where $\mathcal{F}(T)$ is the standard compressed complex associated to $T$ taking $\frac{d}{2}$-compressed complexes $\mathcal{F}_k(f(T))$ and letting $k \to \infty$ (see p. 263 of \cite{Zie} and p. 1366 -- 1369 of \cite{NPT}).
	 	
	 	Note that the part involving $|F|$ is multiplied by $2$. \\
	 \end{prop}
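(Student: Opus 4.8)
The plan is to reduce the statement to the defining identity of the gamma vector together with an elementary bookkeeping argument on the faces of $\Gamma$. Recall that, since $h_\Delta(t)$ satisfies $h_k = h_{d-k}$, it lies in the $\R$-span of the polynomials $t^i(1+t)^{d-2i}$, and $\gamma(\Delta) = (\gamma_0,\gamma_1,\ldots,\gamma_{\lfloor d/2\rfloor})$ is by definition the tuple of coefficients in $h_\Delta(t) = \sum_{i \ge 0} \gamma_i\, t^i(1+t)^{d-2i}$. Extracting the coefficient of $t^j$ gives the identity
\[ h_j(\Delta) = \sum_{i \ge 0} \gamma_i \binom{d-2i}{j-i}. \]
So the first step is simply to record this formula and observe, from the hypothesis, that the standard compressed complex $\mathcal{F}(\gamma(\Delta))$ is an honest simplicial complex with $f$-vector exactly $\gamma(\Delta)$, i.e. $f_{i-1}(\mathcal{F}(\gamma(\Delta))) = \gamma_i$ for all $i$; in particular its dimension is at most $\tfrac{d}{2}-1$, so every face $F$ has $|F| \le \tfrac{d}{2}$ and hence $d - 2|F| \ge 0$, which is what makes $2^{[d-2|F|]}$ meaningful.

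Next I would fix the vertex set of $\Gamma$ so that the union $F \cup G$ is unambiguous. Take the ground set $V$ on which $\mathcal{F}(\gamma(\Delta))$ is realized — the ``$k \to \infty$'' stabilization is exactly what guarantees $V$ is large enough to carry $\mathcal{F}(\gamma(\Delta))$ in its final form — and adjoin a disjoint set $W$ with $|W| = d$. Inside $W$ choose the nested chain $2^{[d]} \supseteq 2^{[d-2]} \supseteq 2^{[d-4]} \supseteq \cdots$, identifying $2^{[d-2|F|]}$ with the full simplex on the first $d - 2|F|$ vertices of $W$. With this convention, every face $H \in \Gamma$ decomposes uniquely as $H = F_H \cup G_H$ with $F_H = H \cap V$ and $G_H = H \cap W$, and $H \in \Gamma$ if and only if $F_H \in \mathcal{F}(\gamma(\Delta))$ and $G_H \subseteq [d - 2|F_H|]$. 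Closure under subsets is then formal: if $H' \subseteq H \in \Gamma$, then $F_{H'} \subseteq F_H \in \mathcal{F}(\gamma(\Delta))$, so $F_{H'} \in \mathcal{F}(\gamma(\Delta))$; and since $|F_{H'}| \le |F_H|$ we have $[d - 2|F_H|] \subseteq [d - 2|F_{H'}|]$, so $G_{H'} \subseteq G_H \subseteq [d-2|F_H|] \subseteq [d-2|F_{H'}|]$. Hence $\Gamma$ is a simplicial complex.

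Finally I would count. By the uniqueness of the decomposition $H = F_H \cup G_H$, the faces of $\Gamma$ of size $j$ biject with pairs $(F,G)$ where $F \in \mathcal{F}(\gamma(\Delta))$ has size $i \le j$ and $G \subseteq [d-2i]$ has size $j - i$. For each fixed $i$ there are $f_{i-1}(\mathcal{F}(\gamma(\Delta))) = \gamma_i$ choices of $F$ and $\binom{d-2i}{j-i}$ choices of $G$, so
\[ f_{j-1}(\Gamma) = \sum_{i \ge 0} \gamma_i \binom{d-2i}{j-i} = h_j(\Delta), \]
the last equality being the identity from the first step; this also explains the factor $2$ multiplying $|F|$, as it is precisely the exponent $d - 2i$ in the gamma expansion.

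The step I expect to need the most care is the middle one: making precise the vertex identifications so that $F \cup G$ really is a disjoint union with the right nesting of the simplices $2^{[d-2|F|]}$, and confirming that the standard compressed complex construction $\mathcal{F}_k \to \mathcal{F}$ genuinely yields a simplicial complex whose $f$-vector is $\gamma(\Delta)$ on a stabilized ground set — this is where the Kruskal--Katona/Macaulay input and the references to \cite{Zie} and \cite{NPT} come in. Once the combinatorial model is set up correctly, verifying the simplicial-complex axiom and running the count are immediate.
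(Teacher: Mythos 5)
Your proposal is correct and follows essentially the same route as the source the paper relies on: the key identity $h_j(\Delta) = \sum_{i}\gamma_i\binom{d-2i}{j-i}$ is exactly Observation 6.1 of Nevo--Petersen--Tenner (which the paper explicitly points to as the engine of the proof), and the rest is the same face count of $\Gamma$ via the unique decomposition $H = F \cup G$ with Kruskal--Katona supplying the compressed complex $\mathcal{F}(\gamma(\Delta))$. Your extra care about disjointness of the ground sets and closure under subsets is a faithful elaboration of that argument, not a different approach.
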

	 
	 Since the identity expressing $h_i$ as a linear combination of $\gamma_j$ used to prove the result above (Observation 6.1 on p. 1377 of \cite{NPT}) is an invertible linear transformation, having such a decomposition for the simplicial complex $\Gamma$ such that $h(\Delta) = f(\Gamma)$ (which exists by work of Caviglia--Constantinescu--Varbaro \cite{CCV} and Bj\"orner--Frankl--Stanley \cite{BFS}) is equivalent to the gamma vector of $\Delta$ being the $f$-vector of some simplicial complex. This structure can be recorded in a definition. \\
	 
	 \begin{defn} \label{booldecomp}
	 	A simplicial complex $\Gamma$ of dimension $d - 1$ has a \textbf{Boolean decomposition} if it an be expressed as \[ \Gamma = \{ F \cup G : F \in S, G \in 2^{[d - 2|F|]} \} \] for some simplicial complex $S$.
	 \end{defn}
	 
	 We study how this decomposition behaves under transformations often used on flag simplicial complexes applied to $\Gamma$ or $\Delta$. The result below is one indication that the latter is also natural to consider. We focus on this vertex decomposable ($k = 1$ case of Defintiion 2.1 on p. 579 of \cite{PB}, Definition 11.1 on p. 3965 of \cite{BWa2}) case involving a variation of the flagness property (Definition 4.2 on p. 99 and Remark 3.5 on p. 97 of \cite{CV}) since there are concrete recursive properties to study. \\
	 
	 \begin{thm} (Constantinescu--Varbaro, Theorem 4.3 on p. 99 of \cite{CV}) \\
	 	Let $\Delta$ be a balanced, vertex decomposable, flag simplicial complex on $[n]$. Then there exists a quasi-flag simplicial complex $\Gamma$ such that $f(\Gamma) = h(\Delta)$. \\
	 \end{thm}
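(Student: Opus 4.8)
The plan is to induct on the number of vertices $n$, exploiting how $h$-polynomials transform under vertex decomposition. If $v$ is a vertex of a pure $(d-1)$-dimensional complex $\Delta$, then partitioning faces according to whether they contain $v$ gives $f(\Delta,t) = f(\Delta\setminus v,t) + t\cdot f(\lk_\Delta(v),t)$, and the same bookkeeping at the level of $h$-polynomials yields
\[ h(\Delta,t) \;=\; h(\Delta\setminus v,t) \;+\; t\cdot h\bigl(\lk_\Delta(v),t\bigr), \]
provided $v$ is a shedding vertex, the shedding condition being exactly what keeps $\Delta\setminus v$ pure of dimension $d-1$. (Since $\Delta$ is vertex decomposable it is shellable, hence Cohen--Macaulay, so $h(\Delta)$ is at least the $f$-vector of \emph{some} complex by \cite{CCV}, \cite{BFS}; the content of the theorem is that a quasi-flag one can be chosen.) The base case is $n=d$: the only balanced flag complex of dimension $d-1$ on $d$ vertices is the simplex, whose $h$-vector is $(1,0,\dots,0) = f(\{\emptyset\})$, and $\{\emptyset\}$ is vacuously flag, hence quasi-flag.

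For the inductive step, pick a vertex $v$ witnessing the vertex decomposability of $\Delta$, of colour $c$ say. Then $\Delta\setminus v$ is balanced on the colours $[d]$ (restrict the colouring; the shedding condition keeps it pure of dimension $d-1$), vertex decomposable by definition, and flag since an induced subcomplex of a flag complex is flag; and $\lk_\Delta(v)$ is balanced on the $d-1$ colours $[d]\setminus\{c\}$, vertex decomposable by definition, and flag since a vertex link in a flag complex is flag. Both have fewer than $n$ vertices, so by the inductive hypothesis there are quasi-flag complexes $\Gamma_1,\Gamma_2$ with $f(\Gamma_1) = h(\Delta\setminus v)$ and $f(\Gamma_2) = h(\lk_\Delta(v))$. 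Because $\lk_\Delta(v)\subseteq\Delta\setminus v$ as subcomplexes, the aim is to produce these realisations so that, after relabelling vertices, $\Gamma_2$ is a subcomplex of $\Gamma_1$. Granting this, form the simplicial complex $\Gamma = \Gamma_1\cup(w\ast\Gamma_2)$ with $w$ a fresh apex: this is well defined precisely because $\Gamma_2\subseteq\Gamma_1$; its dimension is at most $\max(\dim\Gamma_1,\,1+\dim\Gamma_2)\le d-1$, where balancedness of $\Delta$ --- which bounds the top degree of $h(\lk_\Delta(v))$ by $d-1$ --- is what prevents the cone from overshooting; and its faces are the faces of $\Gamma_1$ together with the sets $\{w\}\cup\sigma$ for $\sigma\in\Gamma_2$, whence
\[ f(\Gamma,t) \;=\; f(\Gamma_1,t) + t\cdot f(\Gamma_2,t) \;=\; h(\Delta\setminus v,t) + t\cdot h\bigl(\lk_\Delta(v),t\bigr) \;=\; h(\Delta,t). \]

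It remains to verify that $\Gamma = \Gamma_1\cup(w\ast\Gamma_2)$ is quasi-flag, and this is where Definition 4.2 of \cite{CV} does its work: quasi-flagness is set up to be stable under adjoining, along a new apex, a cone over a quasi-flag subcomplex, with flag complexes as the base case. So one simply checks that $\Gamma$ matches that definition, using the quasi-flagness of $\Gamma_1$ and $\Gamma_2$ together with the containment $\Gamma_2\subseteq\Gamma_1$. (The complex $\Gamma$ genuinely need not be flag: the apex $w$ and a minimal non-edge of $\Gamma_1$ form a minimal non-face of size three, which is exactly why ``flag'' is relaxed to ``quasi-flag''.)

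The main obstacle is securing the nested realisation $\Gamma_2\subseteq\Gamma_1$. The inductive constructions applied to $\Delta\setminus v$ and to $\lk_\Delta(v)$ run along their own shedding vertices and a priori live on unrelated vertex sets, so a plain appeal to the inductive hypothesis does not place one inside the other. The remedy is to strengthen the statement proved by induction --- for instance, to assert that one can realise, \emph{compatibly}, the $h$-vector of a balanced vertex-decomposable flag complex together with the $h$-vectors of its vertex links by a nested family of quasi-flag complexes --- or else to bypass the recursion and build $\Gamma$ in a single pass from the binary tree of shedding vertices of $\Delta$: the restriction sets of the induced shelling order become the candidate faces of $\Gamma$, and one checks, using flagness and the colour bound from balancedness, both that these sets form a quasi-flag complex and that it has $f$-vector $h(\Delta)$. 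Once a compatible (equivalently, nested) realisation is in hand, the remaining steps are the bookkeeping above.
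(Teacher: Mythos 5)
This theorem is quoted from Constantinescu--Varbaro (Theorem 4.3 of \cite{CV}) and the paper does not reprove it, so the relevant comparison is with the source's argument, whose architecture your proposal correctly reconstructs: induct along the vertex decomposition, use the identity $h(\Delta,t) = h(\Delta\setminus v,t) + t\, h(\lk_\Delta(v),t)$ for a shedding vertex $v$, observe that both the deletion and the link inherit the balanced, vertex decomposable, flag hypotheses (with the link losing one colour and one dimension), and realize the sum of $f$-polynomials by $\Gamma = \Gamma_1 \cup (w \ast \Gamma_2)$ with $\Gamma_2$ a subcomplex of $\Gamma_1$. This is exactly the construction the present paper leans on in Section \ref{gamfdec} (the complex $S' = S \cup (S_e \ast u)$), and your $h$-polynomial recursion and dimension bookkeeping are correct.

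You have not, however, proved the theorem: the decisive step --- producing the quasi-flag realizations of $h(\Delta\setminus v)$ and $h(\lk_\Delta(v))$ so that the second sits inside the first --- is labelled ``the main obstacle'' and then only gestured at with two unexecuted remedies. Without that nesting, $\Gamma_1 \cup (w \ast \Gamma_2)$ is not a well-defined simplicial complex, the $f$-vector identity does not follow, and the quasi-flagness check has nothing to attach to. Note that the cheap route used elsewhere in this paper for the non-flag version of the statement --- comparing $f$-vectors and passing to compressed complexes to force an inclusion --- is unavailable here, since compression destroys flagness and quasi-flagness; closing the gap genuinely requires the strengthened inductive statement you allude to (a compatibly nested family of realizations), which is where the flag and balanced hypotheses do their real work in \cite{CV}. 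Relatedly, your verification that $\Gamma$ is quasi-flag is circular as written: you assert that Definition 4.2 of \cite{CV} ``is set up to be stable under'' precisely the operation you perform, without stating the definition. That closure property is essentially the content of the definition, so the step is harmless once the nesting is secured, but as presented it is an assumption rather than an argument.
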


	 In addition, the $f$-vector of a flag simplicial complex $\Gamma$ is the $h$-vector of some vertex decomposable flag complex $\Delta$ by work of Cook--Nagel (Corollary 3.10 on p. 94 of \cite{CN}). Conjecturally, the $h$-vector of every vertex decomposable flag complex is the $f$-vector of some flag complex (Corollary 3.11 on p. 94 of \cite{CN}). We consider how possible structures on $\Gamma$ compare with those of other flag simplicial complexes that are PL homeomorphic to it. By a result of Lutz--Nevo (Theorem 1.2 on p. 70 and p. 77 of \cite{LN}), it suffices to consider edge subdivisions and contractions of $\Gamma$. \\
	 
	 This motivates a generalization of the Boolean decompositions.
	 
	 \begin{defn} \label{genbool}
	 	A \textbf{generalized Boolean decomposition} of a simplicial complex $\Gamma$ of dimension $d - 1$ is a decomposition of $\Gamma$ as the disjoint union of collections of faces of the form \[ \{ F \cup G : F \in S, G \subset 2^{[d - 2|F|]} \} \] where $S$ is either a simplicial complex or the open star of one and $G$ is a subcollection of the $2^{[d - 2|F|]}$ restricted by size (i.e. faces $F \cup G$ with $|F|$ lying in a specified interval) or containing a specific element (similar to the open star). Here, the ``open star'' of a simplicial complex $\Delta$ means the faces of $\Delta$ containing a particular subset $R \subset V(\Delta)$ of the vertex set $V(\Delta)$ (p. 7 of \cite{HSL}).
	 \end{defn}
	 
	 We use this to show the following:
	 
	 \begin{prop} \label{edgegenbool}
	 	The edge subdivision of a simplicial complex that has a Boolean decomposition has a generalized Boolean decomposition.
	 \end{prop}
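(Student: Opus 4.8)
The plan is to use the explicit description of the stellar edge subdivision and track the Boolean structure through it. Write the decomposition of $\Gamma$ as $\{F\cup G: F\in S,\ G\subseteq[d-2|F|]\}$, with the Boolean vertices $[d]=\{b_1,\dots,b_d\}$ disjoint from $V(S)$ and a face $F\in S$ of size $j$ joined to arbitrary subsets of $\{b_1,\dots,b_{d-2j}\}$. Let $e=\{a,b\}$ be the subdivided edge and $v$ the new midpoint. One has the disjoint union
\[ \Gamma'=\bigl(\Gamma\setminus\mathrm{st}^{\circ}_\Gamma(e)\bigr)\ \sqcup\ \bigl(\{v\}*\partial e*\lk_\Gamma(e)\bigr), \]
where $\mathrm{st}^{\circ}_\Gamma(e)=\{\sigma\in\Gamma:\sigma\supseteq e\}$ and $\partial e=\{\varnothing,\{a\},\{b\}\}$; this is genuinely disjoint since every face on the right contains $v$ and none on the left does. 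So it suffices to give a generalized Boolean decomposition of each part and concatenate.

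For $\Gamma\setminus\mathrm{st}^{\circ}_\Gamma(e)$, split the edge as $e=F_e\sqcup G_e$ with $F_e=e\cap V(S)$ and $G_e=e\cap[d]$, so $(|F_e|,|G_e|)\in\{(2,0),(1,1),(0,2)\}$. A face $F\cup G\in\Gamma$ avoids $e$ iff $F\not\supseteq F_e$ or $G\not\supseteq G_e$, hence
\[ \Gamma\setminus\mathrm{st}^{\circ}_\Gamma(e)=\{F\cup G: F\in S\setminus\mathrm{st}^{\circ}_S(F_e),\ G\subseteq[d-2|F|]\}\ \sqcup\ \{F\cup G: F\in\mathrm{st}^{\circ}_S(F_e),\ G\subseteq[d-2|F|],\ G\not\supseteq G_e\}. \]
The first summand is an honest Boolean decomposition because $\{F\in S:F_e\not\subseteq F\}$ is again a simplicial complex (and it is empty if $F_e=\varnothing$). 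In the second, $\mathrm{st}^{\circ}_S(F_e)$ is an open star; the condition $G\not\supseteq G_e$ is vacuously false when $G_e=\varnothing$ (so that summand disappears when both endpoints of $e$ are in $S$), equals ``$b_i\notin G$'' when $G_e=\{b_i\}$, and equals ``$b_i\notin G$ or $b_j\notin G$'' when $G_e=\{b_i,b_j\}$, which I would break as $\{b_i\notin G\}\sqcup\{b_i\in G,\ b_j\notin G\}$. After relabeling the Boolean coordinates so that any forbidden coordinate is the last, each block has $G$ ranging over a sublattice of $2^{[d-2|F|]}$ cut out by restrictions of the kind allowed in Definition \ref{genbool}.

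For $\{v\}*\partial e*\lk_\Gamma(e)$, break it disjointly according to which of $\varnothing,\{a\},\{b\}$ appears, giving $\{v\}*\lk_\Gamma(e)$, $\{v,a\}*\lk_\Gamma(e)$, $\{v,b\}*\lk_\Gamma(e)$. The key computation is that the link inherits the structure
\[ \lk_\Gamma(e)=\{F'\cup G': F'\in\lk_S(F_e),\ G'\subseteq[\,d-2|F_e|-2|F'|\,]\setminus G_e\}, \]
subject, only when $G_e\ne\varnothing$, to a floor-type upper bound on $|F'|$ guaranteeing $G_e\subseteq[\,d-2|F_e|-2|F'|\,]$; so $\lk_\Gamma(e)$ has a (generalized) Boolean decomposition whose ground complex is a truncation of $\lk_S(F_e)$. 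In each of the three blocks I would park the new vertex $v$ on the Boolean side as a forced ``contains $v$'' coordinate, send an endpoint of $e$ to the $S$-side when it is a vertex of $S$ (producing an open star inside a truncation of the star of that vertex in $S$) and to the Boolean side when it is a Boolean vertex (as a further ``contains'' restriction), and check that the size bounds $|G|\le d-2|F|$ are respected after relabeling. This presents all three blocks in the form of Definition \ref{genbool}, and concatenating them with the blocks of the first part yields the generalized Boolean decomposition of $\Gamma'$.

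The main obstacle is the bookkeeping rather than any one identity: making the union literally disjoint, verifying the (off-by-one-sensitive) size inequalities that allow $v$ and the endpoints to be absorbed, and normalizing the conditions appearing on $G$ --- which arise naturally as ``avoids $b_j$'' or ``$|G|\le m$'' clauses --- into the precise restrictions permitted by Definition \ref{genbool} via relabeling the Boolean coordinates. The three cases $(|F_e|,|G_e|)\in\{(2,0),(1,1),(0,2)\}$ differ in which blocks collapse to the empty collection, so the cleanest write-up carries them in parallel instead of forcing a single uniform formula.
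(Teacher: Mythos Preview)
Your proposal is correct and follows essentially the same strategy as the paper: a case analysis on $(|F_e|,|G_e|)\in\{(2,0),(1,1),(0,2)\}$ that tracks which faces $F\cup G$ contain $e$ and how the ``doubling'' by the new vertex $v$ interacts with the $S$-part and the Boolean part. The only difference is cosmetic: you organize the argument via the explicit stellar formula $\Gamma'=(\Gamma\setminus\mathrm{st}^\circ_\Gamma(e))\sqcup(\{v\}*\partial e*\lk_\Gamma(e))$ and then decompose each piece, whereas the paper describes directly which faces are unaffected and which get doubled in each of the three cases; both arrive at the same generalized Boolean blocks.
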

	 
	 \begin{proof}
	 	Let $d = \dim \Gamma + 1$ be an even number. Suppose there is some simplicial complex $S$ of dimension $\frac{d}{2} - 1$ such that $\Gamma = \{ F \cup G : F \in S, G \in 2^{[d - 2|F|]} \}$. Let $e = \{ a, b \} \in \Gamma$ be an edge of $\Gamma$. We consider where there is such a decomposition for the edge subdivision $\Gamma'$ of $\Gamma$ with respect to $e$. Let $v \in V(\Gamma')$ be the vertex subdividing what was the edge $e$ in $\Gamma$. In general, we have that $\Ast_{\Gamma'}(v) \cong \Ast_\Gamma(e)$ while faces of $\Gamma$ that contained $e$ are doubled by $v$ and $\lk_\Gamma(e) \cong \lk_{\Gamma'}(a, v) \cong \lk_{\Gamma'}(b, v)$ (with $v$ replacing one of $a$ or $b$). We split this into 3 cases: \\
	 	
	 	\begin{enumerate}
	 		\item Suppose that $a, b \in [d]$ (i.e. the Boolean part $G$ of the decomposition). Without loss of generality, we will assume that $a < b$. Note that there is an extra new vertex $v$ involved in the ``Boolean'' part. Then, $S$ and faces of the form $F \cup G$ with $d - 2|F| \le a - 1$ or $d - 2|F| \ge b + 1$ (equivalently $|F| \ge \frac{d - a + 1}{2}$ or $|F| \le \frac{d - b - 1}{2}$) in general are unaffected. This is because the subdivided faces are those with $a \le d - 2|F| \le b$. Then, the faces $F \in S$ yielding doubled faces $F \cup G$ are those with \[ \frac{d - b}{2} \le |F| \le \frac{d - a}{2}. \] The added part has a decomposition into two parts with the first part from $F \in S$ such that $\frac{d - b}{2} \le |F| \le \frac{d - a}{2}$ and subsets of $[d - 2|F|]$ such that $b \le d - 2|F|$ (equivalently $\frac{d - b}{2} \le |F|$). \\
	 		
	 		
	 		\item Now assume that $a \in V(S)$ and $b \in [d]$. The faces of $\Gamma$ affected are those using $a \in V(S)$ and $b \in [d]$. These come from faces $F \in S$ containing $a$ and subsets $G \in 2^{[d - 2|F|]}$ containing $b$. In other words, the second condition states that $d - 2|F| \ge b$ (equivalently $b \le d - 2|F|$ or $|F| \le \frac{d - b}{2}$). For these faces, the subdividing vertex $v$ doubles the faces by replacing $a$ with $v$ or $b$ with $v$. The added part (faces of $\Gamma'$ not counted by $\Gamma$) has something similar to a Boolean decomposition where we consider faces $F \cup G \in \Gamma$ using $F \in S$ such that $F \ni a$ and subsets of $[d - 2|F|]$ containing $b$ (i.e. $|F| \le \frac{d - b}{2}$) instead of all of $[d - 2|F|]$. \\
	 		
	 		\item If $a, b \in V(S)$, then this is essentially an edge subdivision $S'$ of $S$ using the same $S$. The components/faces $G \in 2^{[d - 2|F|]}$ used remain the same and we still have a Boolean decomposition without splitting the new simplicial complex $\Gamma'$ into parts. 
	 	\end{enumerate}
	 	
	 \end{proof}
	 

	 We now move to the other side and take $\Delta$ to be some flag Cohen--Macaulay simplicial complex. Write $h(\Delta) = f(\Gamma)$ for some simplicial complex $\Gamma$ (which exists by work of Caviglia--Constantinescu--Varbaro \cite{CCV} and Bj\"orner--Frankl--Stanley \cite{BFS}). This simplicial complex $\Gamma$ can be taken to be $d$-chromatic (i.e. 1-skeleton has a proper $d$-coloring of vertices, see Section 6.4 on p. 33 of \cite{BFS}), which means that it is balanced if $h_d(\Delta) \ne 0$ (e.g. if $\Delta$ satisfies the Dehn--Sommerville relations). Given any flag simplicial complex $\Gamma$, there is a balanced flag vertex decomposable simplicial complex $\Delta$ such that $h(\Delta) = f(\Gamma)$ by a result of Constantinescu--Varbaro (Proposition 4.1 on p. 98 -- 99  of \cite{CV}). Let $\Delta'$ be the (stellar) edge subdivision of $\Delta$ at an edge $e \in \Delta$. Suppose that there are simplicial complexes $\Gamma$ and $\Gamma_e$ with Boolean decompositions \[ \Gamma = \{ F \cup G : F \in S, G \in 2^{ [d - 2|F|] } \} \] and \[ \Gamma_e = \{ F_e \cup G_e : F_e \in S_e, G_e \in 2^{ [d - 2|F_e| - 2] } \} \] involving simplicial complexes $S$ of dimension $\frac{d}{2} - 1$ and $S_e$ of dimension $\frac{d}{2} - 2$ such that $f(\Gamma) = h(\Delta)$ and $f(\Gamma_e) = h(\lk_\Delta(e))$. By the invertibility of the linear transformation in Observation 6.1 on p. 1377 of \cite{NPT}, this would mean that $\gamma_i(\Delta) = f_{i - 1}(S)$ and $\gamma_j(\lk_\Delta(e)) = f_{j - 1}(S_e)$. Note that $\gamma_{\Delta'}(t) = \gamma_\Delta(t) + t \gamma_{\lk_\Delta(e)}(t)$. In other words, we have that $\gamma_{\Delta'}(t) = f_S(t) + t f_{S_e}(t)$. This means that $\gamma_i(\Delta') = f_{i - 1}(S) + f_{i - 2}(S_e)$. We would like to relate this to underlying simplicial complex structures on the left hand side.   \\
	 
	 Since $\Delta$ is Cohen--Macaulay, we have that $h_i(\lk_\Delta(e)) \le h_i(\Delta)$ (Corollary 9.2 on p. 127 of \cite{St}). This means that $f_{i - 1}(S_e) \le f_{i - 1}(S)$ and we can treat $S_e$ as a subcomplex of $S$ by using compression complexes. If we set $\Gamma' = \Gamma *_{\Gamma_e} u$ for some new vertex $u$, we have that $f_{i - 1}(\Gamma') = f_{i - 1}(\Gamma) + f_{i - 2}(\Gamma_e)$ (see p. 97 of \cite{CV}). In fact, we can show that this partial coning construction is compatible with the Boolean decomposition in addition to having matching $f$-vectors. We note that the source of this partial coning comes from the construction for simplicial complexes whose $f$-vectors yield $h$-vectors of flag vertex decomposable simplicial complexes from Theorem 3.3 on p. 96 -- 97 of \cite{CV}. Going backwards from $\Gamma'$ to $\Gamma$ is the same as removing $u$ (equivalently identifying $u$ with some vertex of $\Gamma_e$). This is explained in further detail below. \\ 

	 
	 
	 \begin{prop}  \label{commrec}
	 	Let $\Gamma_1$ be a simplicial complex of dimension $D - 1$ and $\Gamma_2 \le \Gamma_1$ be subcomplex of dimension $D - 2$. Let $u$ be a new disjoint vertex. Note that we can instead take $\Gamma_2$ to be a simplicial complex such that $f_i(\Gamma_2) \le f_i(\Gamma_1)$ for all $i$ since we can take compression complexes. If $\Gamma_1$ and $\Gamma_2$ have Boolean decompositions, the simplicial complex \[ \Gamma \coloneq \Gamma_1 *_{\Gamma_2} u = \Gamma_1 \cup (\Gamma_2 * u) \] also has a Boolean decomposition. 
	 \end{prop}
	 
	 \begin{proof}
	 	Let \[ \Gamma_1 = \{  F_1 \cup G_1 : F_1 \in S_1, G_1 \in 2^{[d - 2|F_1|]} \} \] and \[ \Gamma_2 = \{ F_2 \cup G_2 : F_2 \in S_2, G_2 \in 2^{[d - 2|F_2| - 1]} \} \] be Boolean decompositions for $\Gamma_1$ and $\Gamma_2$. Then, we have that $\Gamma_2 * u$ has the Boolean decomposition given by \[ \Gamma_2 * u = \{ F_2 \cup \widetilde{G}_2 : F_2 \in S_2, \widetilde{G}_2 \in 2^{[d - 2|F_2|]} \}, \] where the new vertex $u$ plays the role of the vertex $d$ of $2^{[d]}$ that was ``missing'' in the Boolean decomposition of $\Gamma_2$. The modification of the Boolean part $G_2$ used for $\Gamma_2$ adding this new vertex to the Boolean part is denoted $\widetilde{G}_2$ above. \\
	 	
	 	This yields the Boolean decomposition \[ \Gamma = \{  F \cup G : F_1 \in S_1 \cup S_2, G \in 2^{[d - 2|F|]} \} \] for $\Gamma$. \\
	 \end{proof}
	 
	 
	 We can use this result and to apply the reasoning used in the edge subdivision case to geometrically natural examples. Given a simplicial complex $\Delta$, the key properties we use for inductive purposes are that the flag, vertex decomposable, and balanced properties are also satisfied by $\Ast_\Delta(v)$ and $\lk_\Delta(v)$ if they are satisfied by $\Delta$. Note that these are main recursive inputs in the proofs of Theorem 3.3 on p. 96 -- 97 and Theorem 4.3 on p. 99 -- 100 of \cite{CV}. \\
	 
	 \begin{cor} \label{vertdecbool} ~\\
	 	\vspace{-3mm}
	 	\begin{enumerate}
	 		\item Suppose that a simplicial complex $\Delta$ has an associated simplicial complex $\Gamma$ such that $h(\Delta) = f(\Gamma)$ and $\Gamma$ has a Boolean decomposition. Let $\Delta'$ be the (stellar) subdivision of $\Delta$ with respect to an edge $e \in \Delta$. Then, there is a simplicial complex $\Gamma'$ such that $h(\Delta') = f(\Gamma')$ and $\Gamma$ has a generalized Boolean decomposition. If $\lk_\Delta(e)$ also has $h(\lk_\Delta(e)) = f(\Gamma_e)$ for a simplicial complex $\Gamma_e$ with a Boolean decomposition, then $\Gamma *_{\Gamma_e * v} u$ and $\Ast_\Gamma(p) *_{\Gamma_e} u$ also have Boolean decompositions for any ``Boolean'' vertices $u$, $v$, and $p$ of $\Gamma$. \\
	 		
	 		\item Suppose that $\Delta$ is a simplicial complex satisfying one of the following properties:
	 		
	 		\begin{itemize}
	 			\item $\Delta$ is vertex decomposable and flag.
	 			
	 			\item $\Delta$ is a balanced, vertex decomposable, flag simplicial complex.
	 		\end{itemize}
	 		
	 		Then, these properties are also satisfied by $\Ast_\Delta(v)$ and $\lk_\Delta(v)$ in the respective cases. Also, the existence of Boolean decompositions for simplicial complexes $\Gamma_1$ and $\Gamma_2$ such that $h(\Ast_\Delta(v)) = f(\Gamma_1)$ and $h(\lk_\Delta(v)) = f(\Gamma_2)$ would imply that one exists for $\Gamma \coloneq \Gamma_1 *_{\Gamma_2} u$ for a new vertex $u$ (noting that $h(\Delta) = f(\Gamma)$). \\ 
	 		
	 	\end{enumerate}
	 \end{cor}
	 


	 \color{black}

	 \color{black}
	
	\color{black}


\begin{thebibliography}{8}
		\bibitem{Ad} R. Adin, A new cubical $h$-vector, Discrete Mathematics 157 (1996), 3 -- 14.
		
		
		
		\bibitem{AOS} F. Ardila, M. Owen, and S. Sullivant, Geodesics in $CAT(0)$ cubical complexes, Advances in Applied Mathematics 48 (2012), 142 -- 163. 
		
		
		
		\bibitem{Athgam} C. A. Athanasiadis, Gamma-positivity in combinatorics and geometry, S\'eminaire Lotharingien de Combinatoire 77 (2018), 1 -- 64.
		
		\bibitem{Athsom} C. A. Athanasiadis, Some combinatorial properties of flag simplicial pseudomanifolds and spheres, Arkiv f\"or Matematik 49 (2011), 17 -- 29.
		
		\bibitem{BBC} E. K. Babson, L. J. Billera, and C. S. Chan, Neighborly cubical spheres and a cubical lower bound conjecture, Israel Journal of Mathematics 102 (1997), 297 -- 315.
	
		
		\bibitem{BFS} A. Bj\"orner, P. Frankl, and R. Stanley, The number of facets of balanced Cohen--Macaulay complexes and a generalized Macaulay theorem, Combinatorica 7(1) (1987), 23 -- 34.
		

		
		
		\bibitem{BH} W. Bruns and H. J. Herzog, Cohen-Macaulay rings, Cambridge University Press 39 (1998).
		
		\bibitem{CCV} G. Caviglia, A. Constantinescu, and M. Varbaro, On a conjecture by Kalai, Israel Journal of Mathematics 304 (2014), 469 -- 475. 
		
		\bibitem{CD} R. Charney and M. W. Davis, The Euler characteristic of a nonpositively curved, piecewise Euclidean manifold, Pacific Journal of Mathematics 171(1) (1995), 117 -- 137.
		
		\bibitem{CV} A. Contantinescu and M. Varbaro, On $h$-vectors of Cohen--Macaulay flag complexes, Mathematica Scandinavica 112(1) (2013), 86 -- 111. 
		
		
		\bibitem{CN} D. Cook and U. Nagel, Cohen--Macaulay graphs and face vectors of flag complexes, SIAM Journal of Discrete Mathematics 26(1) (2012), 89 -- 101.
		
		
		\bibitem{DM} M. W. Davis and G. Moussong, Notes on nonpositively curved polyhedra, in Low Dimensional Topology (eds. K. Boroczky, W. Neumann, A. Stipicz), Bolyai Society Math. Studies 8, Janos Bolyai Math. Soc., Budapest (1999), 11 -- 94,  \url{https://people.math.osu.edu/davis.12/notes.pdf}
		

		
		\bibitem{DCox} M. W. Davis, The geometry and topology of Coxeter groups, Princeton University Press (2008).
		
 
		
		
		\bibitem{FMS} C. A. Francisco, J. Mermin, and J. Schweig, A Survey of Stanley--Reisner Theory, Connections Between Algebra, Combinatorics, and Geometry (S. M. Cooper and S Sather-Wagstaff (eds.)), Springer Proceedings in Mathematics \& Statistics 76, Springer Science+Business Media New York (2014).
	
		
		
		\bibitem{Gal} S. R. Gal, Real Root Conjecture Fails for Five- and Higher-Dimensional Spheres, Discrete \& Computational Geometry 34 (2005), 269 -- 284. 
		
		\bibitem{Gro} M. Gromov, Hyperbolic Groups, In: S. M. Gersten (eds) Essays in Group Theory, Mathematical Sciences Research Institute Publications, vol. 8, Springer, New York, NY (1987).
		
		
		\bibitem{Hetinv} G. Hetyei, Invariants des complexes cubiques, Annales des sciences math\'ematiques du Qu\'ebec 20(1) (1996), 35 -- 52.
		
	
		
		\bibitem{He1} G. Hetyei, Tchebyshev Posets, Discrete \& Computational Geometry 32 (2004), 493 -- 520. 
		
		\bibitem{He2} G. Hetyei, Tchebyshev triangulations of stable simplicial complexes, Journal of Combinatorial Theory, Series A 115 (2008), 569 -- 592.
		
		\bibitem{He3} G. Hetyei, The Type B Permutohedron and the Poset of Intervals as a Tchebyshev Transform, Discrete \& Computational Geometry 71 (2024), 918 -- 944.
		
		\bibitem{HN} G. Hetyei and E. Nevo, Generalized Tchebyshev triangulations, Journal of Combinatorial Theory, Series A 137 (2016), 88 -- 125. 
		
		\bibitem{HSL} J. F. P. Hudson, J. L. Shaneson, and J. Lees, Piecewise Linear Topology, University of Chicago Lecture Notes (1969).
		
		\bibitem{LR} N. C. Leung and V. Reiner, The signature of a toric variety, Duke Mathematical Journal 111(2) (2002), 253 -- 286.
		
		\bibitem{LN} F. Lutz and E. Nevo, Stellar theory for flag complexes, Mathematica Scandinavica 118(1) (2016), 70 -- 82.
		
		\bibitem{Mun} J. R. Munkres, Elements of Algebraic Topology, Addison--Wesley Publishing Company (1984).
		
		\bibitem{NP} E. Nevo and T. K. Petersen, On $\gamma$-Vectors Satisfying the Kruskal--Katona Inequalities, Discrete \& Computational Geometry 45 (2011), 503 -- 521.
		
		\bibitem{NPT} E. Nevo, T. K. Petersen, and B. E. Tenner, The $\gamma$-vector of a barycentric subdivision, Journal of Combinatorial Theory, Series A 118 (2011), 1364 -- 1380.
		
		
		\bibitem{Pgam} S. Park, Explicit expressions for the gamma vector leading to connections to upper/lower bounds and structural properties, \url{https://arxiv.org/pdf/2402.13248.pdf}
		
		\bibitem{Pgamcheb} S. Park, Gamma vectors as inverted Chebyshev expansions, type A to B transformations, and connections to algebraic structures, \url{https://arxiv.org/abs/2408.07698}
		
		\bibitem{Pet} T. K. Petersen, Eulerian Numbers, Springer New York (2015).
	
		
		\bibitem{Row} R. Rowlands, Combinatorics of $CAT(0)$ cubical complexes, crossing complexes and co-skeletons, University of Washington Ph.D. Thesis (2023).
		
		\bibitem{Row2} R. Rowlands, Relating $CAT(0)$ cubical complexes and flag simplicial complexes, Advances in Applied Mathematics 147 (2023), 1 -- 35.
		
		
		
		\bibitem{St} R. P. Stanley, Combinatorics and commutative algebra (Second Edition), Progress in Mathematics 41, Birkh\"auser Boston (1996).
		
		
		\bibitem{StEC2} R. P. Stanley, Enumerative Combinatorics -- Volume 2, Cambridge University Press (2001) (2004 digital printing). 
		
		\bibitem{Stfh} R. P. Stanley, $f$-vectors and $h$-vectors of simplicial posets, Journal of Pure and Applied Algebra 71 (1991), 319 -- 331.
		
		\bibitem{Stubc} R. P. Stanley, The Upper Bound Conjecture and Cohen--Macaulay Rings, Studies in Applied Mathematics 54(2) (1975), 135 -- 142.
		
		
		
		\bibitem{Wa} M. L. Wachs, Poset Topology: Tools and Applications, Geometric combinatorics, 497 -- 615, IAS/Park City Math. Ser. 13, Amer. Math. Soc., Providence, RI (2007), \url{https://www.math.miami.edu/~wachs/papers/toolnotes.pdf} or \url{https://arxiv.org/pdf/math/0602226.pdf}
		
		\bibitem{Zie} G. M. Ziegler, Lectures on Polytopes, Graduate Texts in Mathematics, Springer Science+Business Media New York (2007).
		
		
	\end{thebibliography}
\end{document}